\newlength{\figheight}
\newtheorem{thm}{Theorem}[section]
\newtheorem{defn}[thm]{Definition}
\newtheorem{cor}[thm]{Corollary}
\newtheorem{lem}[thm]{Lemma}
\newtheorem{remark}[thm]{Remark}
\newtheorem{fact}[thm]{Fact}
\newtheorem{claim}[thm]{Claim}
\newtheorem{cons}[thm]{Construction}
\newcommand{\sat}[2]{{\rm sat}(#1,#2)}
\newcommand{\isat}[2]{{\rm indsat}(#1,#2)}
\newcommand{\indsat}[1]{#1\text{-induced-saturated}}
\newcommand{\comp}[1]{\overline{#1}}
\newcommand{\isatno}[1]{\left\lceil\frac{#1+1}{3}\right\rceil}
\newcommand{\flip}[2] {#1 \fivedots #2} 
\newcommand{\ex}{{\rm ex}}
\newenvironment{proofcite}[1]{\noindent{\bf Proof of #1.\,}}{\hfill$\Box$}
\newcommand{\noi}{\noindent}
\title{Induced Saturation Number}
\author{Ryan R. Martin}
\address{Department of Mathematics, Iowa State University, Ames, Iowa 50011}
\email{rymartin@iastate.edu}
\thanks{The first author's research partially supported by NSF grant DMS-0901008 and by an Iowa State University Faculty Professional Development grant.}
\author{Jason J. Smith}
\address{Department of Mathematics, Iowa State University, Ames, Iowa 50011}
\email{smithj@iastate.edu}
\thanks{The second author's research partially supported from NSF grant DMS-0901008.}
\subjclass[2010]{Primary 05C35; Secondary 68R10}
\keywords{saturation, induced subgraphs, Boolean formulas, trigraphs}
\begin{document}
\maketitle

\begin{abstract}
In this paper, we discuss a generalization of the notion of saturation in graphs in order to deal with induced structures.  In particular, we define ${\rm indsat}(n,H)$, which is the fewest number of gray edges in a trigraph so that no realization of that trigraph has an induced copy of $H$, but changing any white or black edge to gray results in some realization that does have an induced copy of $H$.

We give some general and basic results and then prove that ${\rm indsat}(n,P_4)=\lceil (n+1)/3\rceil$ for $n\geq 4$ where $P_4$ is the path on $4$ vertices.  We also show how induced saturation in this setting extends to a natural notion of saturation in the context of general Boolean formulas.
\end{abstract}


\section{Introduction}

\subsection{Saturation in graphs}
A graph $G$ is \textit{$H$-saturated} if it does not contain $H$ as a subgraph, but $H$ occurs whenever any new edge is added to $G$. The Tur\'an type problems deal with $H$-saturated graphs, in particular $\ex(n;H)=\max\{|E(G)| : |V(G)|=n, G \text{ is } H\text{-saturated}\}$. Although previous work has been done in the field, $\sat{n}{H}=\min\{|E(G)| : |V(G)|=n, G \text{ is } H\text{-saturated}\}$ was first formally defined by K\'aszonyi and Tuza in \cite{KaszonyiTuza86}. They also find the saturation number for paths, stars, and matchings.  In particular they find
$$ \sat{n}{P_3}=\left\lfloor\frac{n}{2}\right\rfloor, \qquad \sat{n}{P_4}=
 \left\{\begin{array}{ll}                                                                                                k, & \hbox{if $n=2k$;} \\                                                                                                k+1, & \hbox{if $n=2k-1$,}                                                                                              \end{array}\right. \qquad \sat{n}{P_5}=n-\left(\left\lfloor\frac{n-2}{6}\right\rfloor+1\right) , $$
$$ \sat{n}{P_h}=n-\left\lfloor\frac{n}{3\cdot2^{k-1}-1}\right\rfloor, \mbox{when $h=2k\ge 6$,}\quad\mbox{and}\quad \sat{n}{P_h}=n-\left\lfloor\frac{n}{2^{k+1}-2}\right\rfloor, \mbox{when $h=2k+1\ge 7$} . $$

They also find a general upper bound for saturation number. That is for any graph $H$ there exists a constant $c=c(H)$, such that $\sat{n}{H}<cn$.

Pre-dating~\cite{KaszonyiTuza86}, Erd\H{o}s, Hajnal, and Moon in 1964 found the saturation number of complete graphs in \cite{ErdosHajnalMoon64}; that is, $\sat{n}{K_h}=(h-2)n-\binom{h-1}{2}$. Eight years later in~\cite{Ollmann72}, Ollmann found the saturation number of the four cycle to be $\sat{n}{C_4}=\lceil 3(n-5)/2\rceil$.  More recently, R. Faudree, Ferrara, Gould and Jacobson~\cite{Gouldetal09} found the saturation number of $tK_p$ and Chen~\cite{Chen09} found the saturation number of $C_5$.  A more complete background of known saturation results is provided in the dynamic survey by J. Faudree, R. Faudree and Schmitt~\cite{FaudreeSchmitt11}.

\subsection{Notation and definitions}
We would like to generalize the notion of saturation to include induced subgraphs.  To do this we use definitions given by Chudnovsky~\cite{Chudnovsky06}. Let a \textit{trigraph}\footnote{Doug West has suggested the term ``fuzzy graphs'' to comport with the notion of fuzzy sets. However, Rosenfeld~\cite{Rosenfeld} has used this term for a different object and so we choose to use Chudnovsky's notation.} $T$ be a quadruple $(V(T),EB(T),EW(T),EG(T))$, with $V(T)$ being the vertices of the trigraph, $EB(T)$ being the black edges, $EW(T)$ being the white edges, and $EG(T)$ being the gray edges.  We will think of these as edges, non-edges, and `free' edges, where `free' means those edges can either be black or white.  We note here that if $EG(T)=\emptyset$, then our trigraph is just a graph.

\begin{defn}
A \textbf{realization} of a trigraph $T$ is a graph $G$ with $V(G)=V(T)$ and $E(G)=EB(T)\cup S$ for some subset $S$ of $EG(T)$.  That is, we set some gray edges to black edges and the remaining gray edges to white.

For any graph $H$, we say that a trigraph $T$ \textbf{has a realization of $H$} if there is a realization of $T$ which has $H$ as an induced subgraph.

A trigraph $T$ is $\indsat{H}$ if no realization of $T$ contains $H$ as an induced subgraph, but $H$ occurs as an induced subgraph of some realization whenever any black or white edge of $G$ is changed to gray. The \textbf{induced saturation number} of $H$ with respect to $n$ is defined to be $\isat{n}{H}=\min\{|EG(T)| : |V(T)|=n, T \text{ is } \indsat{H}\}$.
\end{defn}

\begin{remark}By definition, the only trigraphs on fewer than $|V(H)|$ vertices that are $\indsat{H}$ are those in which all edges are gray.\end{remark}

In colloquial terms, to find $\isat{n}{H}$ is to find a trigraph $T$ on $n$ vertices with the fewest number of gray edges having the property that there is no induced subgraph $H$ but if any black or white edge is changed to gray, then we find $H$ as an induced subgraph.

If an edge has endvertices $v$ and $w$, we denote it $vw$ or $wv$.  For a vertex $v$ in a trigraph $T$, the \textit{white neighborhood} of $v$ is the set $\{w : wv\in EW(T)\}$. The \textit{black neighborhood} and \textit{gray neighborhood} are defined similarly. In addition, we write $v\sim w$ to mean that $vw\in EB(T)$ and $v\not\sim w$ to mean that $vw\in EW(T)$. In a graph, we say that vertices $v_1v_2\cdots v_k$ \textit{form a path} if $v_iv_{i+1}$ is an edge for $i=1,\ldots,k-1$ but all other pairs of these vertices are nonedges.

Since induced saturation involves taking a trigraph $T$ and changing an otherwise nongray edge $e$ to gray, we denote $\flip{T}{e}$ to be the trigraph that results from making $e$ a gray edge.  This corresponds with the ``$+$'' notation in ordinary saturation.

The \textit{complement}, $\comp{T}$ of a trigraph $T$ is a trigraph with $V(\comp{T})=V(T)$, $EB(\comp{T})=EW(T)$, $EG(\comp{T})=EG(T)$, and $EW(\comp{T})=EB(T)$, which extends the definition of graph complement.  We define a \textit{gray component} to be a set of vertices that are connected in the graph $(V(T),EG(T))$.  We do not require that every edge within the component is gray, just that there is a gray path connecting any pair of vertices.  Analogously, we define \textit{white}, \textit{black}, \textit{black/gray}, and \textit{white/gray components}.

If $V_1$ and $V_2$ are disjoint sets in $V(T)$, then $T[V_1,V_2]$ denotes the set of edges with one endpoint in $V_1$ and the other in $V_2$ and $T[V_1]$ denotes the subtrigraph induced by $V_1$.

\subsection{Main Result}
In Section~\ref{sec:basic}, we establish a few very basic results regarding induced saturation, such as the fact that $\isat{n}{H}$ is bounded by $\sat{n}{H}$, but the main result is the induced saturation number for paths of length 4.

\begin{thm}\label{TheoremISatNoForP4} $\isat{n}{P_4}=\isatno{n}$, for all $n\ge 4$. \end{thm}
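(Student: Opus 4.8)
The plan is to establish the two matching bounds $\isat{n}{P_4}\le\isatno{n}$ and $\isat{n}{P_4}\ge\isatno{n}$ separately. Two facts will be used throughout. First, a graph has no induced $P_4$ if and only if it is a cograph, so ``$T$ has no realization of $P_4$'' means exactly that every realization of $T$ is a cograph; I will use that cographs are precisely the graphs built from single vertices by repeated disjoint union and join. Second, $P_4$ is self-complementary, so $T$ is $\indsat{P_4}$ if and only if $\comp{T}$ is, which roughly halves the case analysis. Note that the bound $\isat{n}{P_4}\le\sat{n}{P_4}$ from Section~\ref{sec:basic} gives only about $n/2$, so the upper bound here genuinely requires a sparser trigraph.

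For the upper bound I would construct an explicit $\indsat{P_4}$ trigraph $T_n$ with exactly $\isatno{n}$ gray edges. The construction I have in mind assembles $T_n$ from small gadgets: one gadget on a constant number of vertices carrying two gray edges (this accounts for the additive $1$ in $\isatno{n}$), together with about $(n-4)/3$ further gadgets, each on three vertices and carrying a single gray edge, all combined by joins and disjoint unions. Since cographs are closed under join and disjoint union, it suffices that every realization of each gadget be a cograph; then every realization of $T_n$ is a cograph and hence $P_4$-free, which settles the requirement that no realization of $T_n$ contains an induced $P_4$. The remaining point is that $\flip{T_n}{e}$ has a realization with an induced $P_4$ for every black or white edge $e$. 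This splits into the case where $e$ is interior to a gadget and the case where $e$ is a ``gluing'' edge between two gadgets, and in each case one writes down a concrete realization of $\flip{T_n}{e}$ together with four vertices inducing $P_4$ in it. The fiddly part of this direction is choosing the gadgets and the pattern of gluing edges so that the gluing edges are themselves flip-sensitive, and making a single description work uniformly for all residues of $n$ modulo $3$ as well as for the small cases $n=4,5,6$.

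The lower bound is where I expect the real difficulty to lie. Let $T$ be an arbitrary $\indsat{P_4}$ trigraph on $n\ge 4$ vertices and set $t=|EG(T)|$; the goal is to prove $3t\ge n+1$. The approach I would attempt is a charging argument: assign to every vertex of $T$ a gray edge ``responsible'' for it — a vertex incident to a gray edge is charged to such an edge, while a vertex $w$ incident to no gray edge is charged to a gray edge $uv$ in a bounded neighborhood of $w$. The heart of the matter is a structural lemma with two halves: (i) such a controlling gray edge exists for every gray-edge-free vertex $w$, since otherwise the black and white edges at $w$ could not all be flip-sensitive; and (ii) no single gray edge can be responsible for four or more vertices, because a cluster of that size around one gray edge would either let some realization of $T$ induce a $P_4$ (contradicting that $T$ has no realization of $P_4$) or would contain a black or white edge whose flip creates no induced $P_4$ (contradicting that $T$ is $\indsat{P_4}$). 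Granting the lemma, every gray edge has load at most $3$, which already gives $t\ge\lceil n/3\rceil$; upgrading this to $3t\ge n+1$ (needed only when $3$ divides $n$) requires ruling out the boundary configuration in which every gray edge has load exactly $3$, which I expect to fall out of the same analysis, for instance by showing that some gray edge must have load at most $2$. Proving part (ii) is the main obstacle: it amounts to classifying, up to complementation, the black/white adjacency patterns on a gray edge together with its neighbors and checking in each that the cograph condition — no $4$-set may induce $P_4$ in \emph{any} realization — forces the cluster to be small. The base cases $n=4,5,6$ would be handled directly.
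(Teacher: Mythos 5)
Your upper-bound plan is compatible with what actually works: the extremal trigraph $T_n'$ can indeed be assembled from three-vertex pieces $\{a_i,b_i,c_i\}$, each carrying one gray edge, plus a final piece supplying the extra gray edge, and since cographs are closed under join and disjoint union, $P_4$-freeness of every realization comes essentially for free. But you have deferred exactly the part that carries all the content: the gluing pattern cannot be arbitrary, because every black and every white gluing edge must itself be flip-sensitive. What is needed is a specific nested (threshold/half-graph) pattern — $c_i$ black to $a_j,b_j$ for $i\le j$ and white for $i>j$, the $a$'s and $b$'s mutually white, the $c$'s mutually black — after which flip-sensitivity is a finite case check. Until you exhibit the gadgets and the gluing and perform that check (for all residues of $n$ modulo $3$), the upper bound is a plan, not a proof.

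The serious gap is in the lower bound. Claim (i) of your charging lemma — that a vertex $w$ with no incident gray edge must have a controlling gray edge nearby, ``since otherwise the edges at $w$ could not all be flip-sensitive'' — does not follow: when $\flip{T}{ww'}$ acquires an induced $P_4$, the witnessing realization may use the flipped pair as its only non-fixed pair, the other five pairs among its four vertices being ordinary black/white edges, so no gray edge near $w$ is forced. Claim (ii), the load bound of $3$, is asserted rather than argued, and it is not a local statement: which gray edge a gray-free vertex should be charged to depends on global structure (in the extremal examples a vertex $c_i$ is black to \emph{many} gray pairs $a_jb_j$, and only a globally consistent assignment keeps every load at $3$), and the boundary refinement needed for the $+1$ when $3\mid n$ is likewise unaddressed. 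The actual lower bound proceeds quite differently, by strong induction on $n$: one first shows a gray edge exists (Lemma~\ref{LemGrayExists}), that every nontrivial gray component is a star or triangle (Fact~\ref{FactNoGrayP4}), that such a component partitions the remaining vertices into monochromatically attached sets $X,Y,Z$ (Facts~\ref{StarBehavior} and~\ref{TriangleBehavior}), that $T[Z]$ is again $\indsat{P_4}$, and that $X\cup Y$ decomposes into nested equivalence classes each of which is again $\indsat{P_4}$ (Lemma~\ref{TechnicalLemma}); induction then supplies the gray edges. To make your charging route rigorous you would essentially have to establish these same structural facts, at which point the induction is the natural way to finish; as written, the central lemma of your lower bound is missing and its stated justification is invalid.
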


The proof of this result comprises the rest of the paper. Section~\ref{sec:upperbd} gives the upper bound $\isat{n}{P_4}\leq\isatno{n}$.  Section~\ref{sec:lowerbd} gives the lower bound $\isat{n}{P_4}\geq\isatno{n}$.  Section~\ref{sec:facts} gives a number of general facts regarding $\indsat{P_4}$ trigraphs which are used repeatedly in the subsequent sections. Section~\ref{sec:mainproof} is the main proof of the lower bound. Section~\ref{sec:proofgrayexists} contains the proof that a $\indsat{P_4}$ trigraph contains at least one gray edge and Section~\ref{sec:prooftechlemma} contains the proof of an important technical lemma. Section~\ref{sec:factproofs} contains the proofs of the facts enumerated in Section~\ref{sec:facts}. Section~\ref{sec:conc} contains some concluding remarks.

\subsection{Generalizations and Applications}

Although we are concerned with induced graph saturation, there is a more general view, in terms of satisfiability.  Given a disjunctive normal form (DNF), we want to find a partial assignment of variables such that (1) there is no way to complete the assignment to a true one but (2) if any one of the assigned variables were unassigned, then this new partial assignment can be completed to a true one.

In the specific case of graphs, the DNF constructed from the set of pairs of $n$ vertices is one comprised of clauses, each of which represents an instance of a potential induced copy of $H$. For instance, if $e_1,\ldots,e_6$ represent unordered pairs such that $e_1,e_2,e_3$ being edges and $e_4,e_5,e_6$ being nonedges induces $P_4$, then the corresponding clause is $x_1\wedge x_2\wedge x_3\wedge\overline{x_4}\wedge\overline{x_5}\wedge\overline{x_6}$, where the variable $x_i$ corresponds to the pair $e_i$, for $i=1,\ldots,6$.

The trigraph has a number of applications related to Szemer\'edi's Regularity Lemma~\cite{RegLem} (see also~\cite{RegLemSurvey1,RegLemSurvey2}). A trigraph can also be thought of as a reduced graph in which the a black edge represents a pair with density close to $1$, a white edge represents a pair with density close to $0$ and a gray edge represents a pair with density neither near $1$ nor near $0$. Such a configuration is used in a number of applications of the regularity lemma related to induced subgraphs.  See, for instance,~\cite{BaloghMartin}.


\section{Basic Results}
\label{sec:basic}
In this section we establish some basic results on the induced saturation number.  We find a relationship between induced saturation and classical saturation as well as establish the induced saturation number for a few families of graphs.

The first result presented relates the saturation number to the induced saturation number.

\begin{thm}
$\isat{n}{H}\le\sat{n}{H}$ for all graphs $H$ and all positive integers $n\geq |V(H)|$.
\end{thm}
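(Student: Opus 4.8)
The plan is to take an $H$-saturated graph $G$ on $n$ vertices with $\sat{n}{H}$ edges and build a trigraph $T$ realizing the same saturation behaviour in the induced sense, using exactly the edges of $G$ as gray edges. Concretely, I would set $V(T)=V(G)$, let $EG(T)=E(G)$, let $EW(T)$ be the set of all non-edges of $G$, and let $EB(T)=\emptyset$. Thus $T$ has $|E(G)|=\sat{n}{H}$ gray edges, and it suffices to verify that $T$ is $\indsat{H}$; once that is done, the minimality in the definition of $\isat{n}{H}$ gives $\isat{n}{H}\le|EG(T)|=\sat{n}{H}$.

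The verification splits into two parts. First, no realization of $T$ contains $H$ as an induced subgraph: every realization $G'$ of $T$ satisfies $E(G')\subseteq EB(T)\cup EG(T)=E(G)$ and $V(G')=V(G)$, so $G'$ is a spanning subgraph of $G$; since $G$ itself is $H$-free (not necessarily as induced subgraph, but $H$-subgraph-free, which is stronger) every subgraph of $G$ is $H$-free, hence certainly has no induced copy of $H$. Here one must be a little careful: ``$H$-saturated'' in the classical sense means $H$-free as a subgraph, and an induced copy of $H$ is in particular a copy of $H$, so the implication goes the right way. Second, I must check that flipping any black or white edge of $T$ to gray creates a realization with an induced $H$. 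Since $EB(T)=\emptyset$, the only flips to consider are white edges $e=vw$, i.e.\ non-edges of $G$. Because $G$ is $H$-saturated, $G+e$ contains a copy of $H$; I then exhibit a realization of $\flip{T}{e}$ that has this copy of $H$ as an \emph{induced} subgraph. The natural candidate is the realization in which every gray edge of $\flip{T}{e}$ is set black exactly when it is an edge inside the vertex set of that copy of $H$ and to match $H$ on the non-edges — more precisely, choose the realization $G'$ with $V(G')=V(T)$ whose edge set, restricted to the $|V(H)|$ vertices of the $H$-copy, agrees with $H$, and set all other gray edges (those not incident to the copy, or incident but needing to be white) arbitrarily, say white. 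Then the copy of $H$ is induced in $G'$, and $G'$ is a legitimate realization of $\flip{T}{e}$ because the edge $e$ is now gray and can be set to whatever $H$ requires.

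The one genuine subtlety — and the step I expect to need the most care — is making sure the promised realization $G'$ really is a realization of $\flip{T}{e}$: all edges of $H$ that we want present must correspond to edges that are black or gray in $\flip{T}{e}$, and all non-edges of $H$ that we want absent must correspond to edges that are white or gray. Inside the $H$-copy in $G+e$, an edge of $H$ is either an edge of $G$ (gray in $T$, hence gray in $\flip{T}{e}$, fine) or the new edge $e$ (now gray, fine); a non-edge of $H$ is a non-edge of $G+e$, hence a non-edge of $G$ different from $e$, hence white in $T$ and still white in $\flip{T}{e}$, which is exactly what an induced non-edge needs. So the assignment is consistent, and $G'$ is a valid realization with an induced $H$. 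This handles every white-edge flip, and there are no black edges to flip, completing the proof that $T$ is $\indsat{H}$ and hence that $\isat{n}{H}\le\sat{n}{H}$.
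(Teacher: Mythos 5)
Your construction is exactly the paper's: gray-ify the edges of a minimum $H$-saturated graph $G$, leave non-edges white, use no black edges, and observe that realizations are subgraphs of $G$ while any white flip lets the new copy of $H$ in $G+e$ be realized as an \emph{induced} copy because every pair inside it is gray or white; the paper states this in two sentences and you have simply filled in the details. One small slip in your last paragraph --- a non-edge of the chosen $H$-copy need not be a non-edge of $G+e$, since the copy is only a subgraph copy --- is harmless, because any such pair is an edge of $G$, hence gray, and can be set white in the realization, exactly as your earlier description of the realization already allows.
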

\begin{proof} We can take the graph that gives the upper bound for saturation number and change all the edges to gray edges and leave all non-edges white.  Then clearly changing any white edge to gray forms an induced graph $H$ in the same way as adding any black edge created an $H$ in the case of saturation of graphs.
\end{proof}

For the complete graph, the numbers are identical:
\begin{thm}
If $K_h$ is an $h$-vertex complete graph, then $\isat{n}{K_h}=\sat{n}{K_h}=(h-2)n-\binom{h-1}{2}$ for all $n\geq h\geq 3$.
\end{thm}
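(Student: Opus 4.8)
The plan is to prove the two inequalities $\isat{n}{K_h} \geq \sat{n}{K_h}$ and $\isat{n}{K_h} \leq \sat{n}{K_h}$ separately, since the exact value $(h-2)n - \binom{h-1}{2}$ is already known from Erd\H{o}s--Hajnal--Moon and the previous theorem gives one direction for free. Indeed, $\isat{n}{K_h} \leq \sat{n}{K_h}$ is immediate from the preceding theorem, so the entire content is the reverse inequality: every $\indsat{K_h}$ trigraph $T$ on $n$ vertices must have $|EG(T)| \geq (h-2)n - \binom{h-1}{2}$.

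The key observation driving the lower bound is that for complete graphs, gray edges behave exactly like black edges when it comes to \emph{creating} an induced $K_h$: an induced copy of $K_h$ in a realization uses only black and (chosen-to-be-black) gray edges, and it uses \emph{no} white edges at all (since $K_h$ has no non-edges). So I would pass to the graph $G^*$ on vertex set $V(T)$ whose edge set is $EB(T) \cup EG(T)$ (all black and gray edges made black). First, I claim $G^*$ is $K_h$-free: if $G^*$ contained a $K_h$, then the realization of $T$ obtained by setting exactly those gray edges inside this clique to black would contain an induced $K_h$, contradicting that no realization of $T$ has an induced $K_h$. Second, I claim $G^*$ is $K_h$-saturated, i.e., adding any non-edge $e$ of $G^*$ creates a $K_h$. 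A non-edge of $G^*$ is precisely a white edge $e$ of $T$; since $T$ is $\indsat{K_h}$, the trigraph $\flip{T}{e}$ has a realization with an induced $K_h$, and that induced $K_h$ must use the edge $e$ (otherwise it would already be an induced $K_h$ in a realization of $T$ itself). That induced copy, viewed in $G^* + e$, is a $K_h$ using $e$; hence $G^* + e \supseteq K_h$. Therefore $G^*$ is $K_h$-saturated, so $|E(G^*)| \geq \sat{n}{K_h} = (h-2)n - \binom{h-1}{2}$ (using here that the saturation number is also the minimum, as established by Erd\H{o}s--Hajnal--Moon, who in fact showed the extremal graph is unique and all $K_h$-saturated graphs have at least this many edges). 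Since $|E(G^*)| = |EB(T)| + |EG(T)| \geq |EG(T)|$, we conclude $|EG(T)| \leq |E(G^*)|$ — wait, this gives an upper bound on $|EG(T)|$ in terms of $|E(G^*)|$, not a lower bound; I need to be careful here.

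Let me reconsider the direction. The inequality $|E(G^*)| \geq \sat{n}{K_h}$ only says $G^*$ has \emph{many} edges, which does not directly lower-bound $|EG(T)|$. To get the lower bound on the number of gray edges, I would instead argue about $G^*$ from the other side: I should show that the black edges of $T$ alone form a very restricted graph, so that $G^*$ cannot be $K_h$-saturated unless enough gray edges are present. Concretely, consider the graph $G_B = (V(T), EB(T))$. I would argue that $G_B$ has no induced $K_{h-1}$ with a common black neighbor pattern forcing... actually the cleanest route: since no realization of $T$ contains an induced $K_h$, the graph $G^* = (V(T), EB(T) \cup EG(T))$ is $K_h$-free as shown; and since $T$ is $\indsat{K_h}$, $G^*$ is $K_h$-saturated as shown. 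A $K_h$-saturated graph on $n \geq h$ vertices has \emph{minimum degree at least $h-2$} (a standard fact: every vertex lies in a near-copy of $K_h$), hence at least $(h-2)n/2$ edges — but we want to count \emph{gray} edges, and the black edges could be contributing. The resolution is that we do not need to separate black from gray at all for the lower bound \emph{if} we can show $T$ has \emph{no black edges}, or rather we bound gray edges directly: I would show that every edge of $G^*$ that lies in \emph{some} non-edge-extension to $K_h$ must be gray, and then invoke that in a $K_h$-saturated graph essentially all edges play this role. The main obstacle, and where I would spend the most care, is exactly this: showing that a black edge of $T$ cannot "help" too much, i.e., establishing that the minimum-degree and edge-count lower bounds for $K_h$-saturated graphs transfer to a lower bound on \emph{gray} edges specifically. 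I expect the right statement is that $EB(T)$ must itself be empty or nearly so — because if $vw \in EB(T)$ and $vw$ lies in no potential $K_h$, the saturation hypothesis is vacuous there, but if it does lie in one, the non-edges of that clique are white edges whose flipping-to-gray argument shows... — and I would resolve it by the symmetric complement trick or by directly showing $\isat{n}{K_h} \geq \sat{n}{K_h}$ follows because any $\indsat{K_h}$ trigraph with a black edge can be modified to one with fewer gray edges, contradicting minimality, forcing $EB(T) = \emptyset$ and hence $|EG(T)| = |E(G^*)| \geq \sat{n}{K_h}$.
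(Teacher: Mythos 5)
Your reduction of the lower bound to the claim $EB(T)=\emptyset$ is exactly the right target --- indeed it is precisely the assertion the paper's (very terse) proof rests on: once a $\indsat{K_h}$ trigraph has no black edges, its realizations are exactly the subgraphs of the gray graph $(V(T),EG(T))$, so that graph must be $K_h$-saturated in the classical sense and $|EG(T)|\ge\sat{n}{K_h}$ follows. But you never actually prove $EB(T)=\emptyset$; you explicitly leave it as ``the main obstacle,'' and the two mechanisms you gesture at do not work. In particular, ``any $\indsat{K_h}$ trigraph with a black edge can be modified to one with fewer gray edges, contradicting minimality'' is backwards: you are trying to prove a \emph{lower} bound on $|EG(T)|$ for \emph{every} $\indsat{K_h}$ trigraph, so producing a trigraph with fewer gray edges contradicts nothing, and there is no minimality hypothesis on $T$ to appeal to.

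The missing step has a short direct proof, driven by the same observation you already made (that $K_h$ has no non-edges, so white edges can never help create an induced $K_h$). Suppose $e=vw\in EB(T)$. Since $T$ is $\indsat{K_h}$, the trigraph $\flip{T}{e}$ has a realization $R$ with an induced $K_h$ on some vertex set $S$. If $R$ uses $e$ as black, then $R$ is already a realization of $T$, a contradiction. If $R$ uses $e$ as white, then $S$ cannot contain both $v$ and $w$ (a copy of $K_h$ contains no non-edge), so no pair inside $S$ equals $e$; resetting $e$ to black in $R$ yields a realization of $T$ in which $S$ still induces $K_h$ --- again a contradiction. Hence $EB(T)=\emptyset$, and your argument closes: the gray graph is $K_h$-free (else some realization contains an induced $K_h$) and becomes non-$K_h$-free upon adding any white pair, so it is $K_h$-saturated and $|EG(T)|\ge\sat{n}{K_h}=(h-2)n-\binom{h-1}{2}$ by Erd\H{o}s--Hajnal--Moon. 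With this patch your route coincides with the paper's; without it, the proposal as written is incomplete, since your $G^*$ argument only bounds $|EB(T)|+|EG(T)|$ from below, as you yourself noticed.
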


\begin{proof}
This is a direct result of the fact that $K_h$ being an induced subgraph is identical to $K_h$ being a subgraph. All $\indsat{K_h}$ trigraphs have no black edges.
\end{proof}

One may wonder if it is ever the case that $\isat{n}{H}<\sat{n}{H}$.  That is, can we improve on the bound provided by the saturation number? Theorem~\ref{CompleteWithoutEdge} shows that the answer is yes.  We know from K\'aszonyi and Tuza in \cite{KaszonyiTuza86} that $\sat{n}{P_3}=\left\lfloor\frac{n}{2}\right\rfloor$ and, trivially, $\sat{n}{K_h^{-}}\ge\left\lfloor\frac{n}{2}\right\rfloor$, for $h\ge4$.

\begin{thm}\label{CompleteWithoutEdge}
Let $K_h^{-}$ denote a graph on $h$ vertices with exactly one nonedge. Then, $\isat{n}{K_h^{-}}=0$ for all $n\geq h$.
\end{thm}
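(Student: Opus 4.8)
The plan is to exhibit an explicit extremal trigraph with no gray edges at all, namely the complete graph $K_n$ itself regarded as a trigraph with $EB(T)=\binom{V}{2}$ and $EW(T)=EG(T)=\emptyset$. Since this trigraph has zero gray edges and $\isat{n}{K_h^-}\ge 0$ trivially, it suffices to check that $K_n$ is $\indsat{K_h^-}$, after which $\isat{n}{K_h^-}=0$ follows immediately.

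First I would verify that no realization of $K_n$ contains an induced $K_h^-$. The only realization is $K_n$ itself, and every induced subgraph of a complete graph is complete; since $K_h^-$ has a nonedge, it never occurs as an induced subgraph of $K_n$.

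Next I would check the saturation condition. The trigraph $K_n$ has no white edges, so white-edge flips impose no requirement. For a black edge $e=uv$, consider $\flip{K_n}{e}$; one of its realizations is the graph $K_n-uv$. Since $n\ge h$, pick any set $W$ of $h-2$ vertices distinct from $u$ and $v$. In $K_n-uv$ the set $\{u,v\}\cup W$ induces a graph on $h$ vertices whose unique nonedge is $uv$, which is precisely $K_h^-$. Hence every black-edge flip yields a realization with an induced $K_h^-$, so $K_n$ is $\indsat{K_h^-}$.

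There is no substantive obstacle in this argument; the only point needing care is that the black-edge-flip verification requires $h-2$ vertices outside $\{u,v\}$, and this is exactly what the hypothesis $n\ge h$ provides (with equality in the edge case $n=h$, where the full vertex set is used). I would close by noting that, together with the stated bound $\sat{n}{K_h^-}\ge\lfloor n/2\rfloor$ for $h\ge 4$, this establishes that $\isat{n}{H}$ can be strictly — indeed dramatically — smaller than $\sat{n}{H}$.
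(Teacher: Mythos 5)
Your proposal is correct and is essentially identical to the paper's argument: both take the trigraph to be the all-black complete graph on $n$ vertices, observe that its unique realization $K_n$ contains no induced $K_h^-$, and note that flipping any black edge to gray lets that edge serve as the single nonedge of an induced $K_h^-$. Your write-up simply spells out the verification (choice of the $h-2$ extra vertices, the role of $n\ge h$) in more detail than the paper does.
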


\begin{proof}
For an upper bound, we choose our trigraph $T$ to be the complete graph of black edges on $n$ vertices.  Clearly, $T$ does not contain an induced $K_h^{-}$.  Now, choose an arbitrary edge, $e$, in $T$, and consider $\flip{T}{e}$. Since all edges were black, the new gray edge can be used as the nonedge in a copy of $K_h^{-}$.
\end{proof}

Since $P_3$, the path on $3$ vertices, is the same as $K_3^{-}$, $\isat{n}{P_3}=0$ as well.
\begin{cor}
$\isat{n}{P_3}=0$, for all $n\geq 3$.
\end{cor}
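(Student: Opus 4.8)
The plan is to obtain this as an immediate consequence of Theorem~\ref{CompleteWithoutEdge}. The only thing to observe is the elementary identity $P_3 = K_3^-$: the path on three vertices consists of two edges and one nonedge on three vertices, which is precisely what one gets by deleting a single edge from $K_3$. So I would simply note that $\isat{n}{P_3} = \isat{n}{K_3^-}$ and then invoke Theorem~\ref{CompleteWithoutEdge} with $h = 3$ to conclude $\isat{n}{P_3} = 0$ for all $n \geq 3$.

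If one prefers a self-contained argument, I would unwind the construction from the proof of Theorem~\ref{CompleteWithoutEdge} in this special case. Let $T$ be the trigraph on $n$ vertices all of whose edges are black. Its unique realization is $K_n$, which contains no induced $P_3$. For any edge $e = uv$, the trigraph $\flip{T}{e}$ has a realization in which $uv$ is a nonedge and all other pairs are edges; together with any third vertex $w$, the triple $u, w, v$ then induces a $P_3$. Hence $T$ is $\indsat{P_3}$ and has zero gray edges, and since $|EG(T)| \geq 0$ always holds, this forces $\isat{n}{P_3} = 0$. There is essentially no obstacle to overcome here: the entire content of the corollary is the trivial graph-theoretic fact that a triangle minus an edge is a $P_3$, after which Theorem~\ref{CompleteWithoutEdge} does all the work.
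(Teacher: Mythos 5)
Your proof is correct and is exactly the paper's argument: the corollary is deduced by observing that $P_3=K_3^{-}$ and invoking Theorem~\ref{CompleteWithoutEdge} with $h=3$. Your optional self-contained unwinding simply reproduces the construction in the proof of that theorem specialized to $h=3$, so there is nothing to add.
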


So, not only can the induced saturation number differ drastically from $K_h$ to $K_h^{-}$, but it can be much less than the saturation number for the same graph. Theorem~\ref{TheoremISatNoForP4} establishes that the induced saturation number $\isat{n}{P_4}$ is both nonzero for $n\geq 2$ and is strictly less than $\sat{n}{P_4}$ for $n=5$ and $n\geq 7$.



\section{Proof of upper bound in the main result}
\label{sec:upperbd}

In this section, we prove the following via constructions.
\begin{lem} $\isat{n}{P_4}\le\lceil\frac{n+1}{3}\rceil$ for $n\geq 4$.
\label{lem:upperbd}
\end{lem}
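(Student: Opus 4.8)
The plan is to exhibit, for each $n\geq 4$, an explicit $\indsat{P_4}$ trigraph $T$ on $n$ vertices with exactly $\isatno{n}$ gray edges. Since $P_4$ is self-complementary, a natural building block is a gadget on a small number of vertices whose realizations are all either ``complete-ish'' or ``empty-ish'' locally, so no induced $P_4$ appears, yet which is fragile in the required sense. I would first handle $n$ in a fixed residue class mod $3$ by describing a base gadget on $3$ vertices contributing one gray edge, then chain copies of the gadget together using black and white edges between the copies in a way that kills any $P_4$ that tries to span more than one gadget, and finally patch the remainder ($n\equiv 0,1,2\pmod 3$) with one or two extra vertices, checking that the gray-edge count matches $\isatno{n}=\lceil (n+1)/3\rceil$.

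Concretely, I would take the vertex set partitioned into $\lceil (n+1)/3\rceil$ groups, most of size $3$, and within each size-$3$ group put a single gray edge (say $EG$ restricted to the group is one edge $v_1v_2$, with $v_1v_3, v_2v_3$ black, or some similarly chosen configuration), while between groups all edges are white — or some consistent choice of black/white between groups that I would pin down by trial so that (i) no realization has an induced $P_4$, and (ii) flipping any non-gray edge to gray creates one in some realization. The key verification for (i): a $P_4$ has two vertices of degree $1$ and two of degree $2$ in the induced subgraph, and I must show that in every realization $G$ of $T$, any $4$-set spans something other than $P_4$ — this follows if, for instance, between-group edges are all white (so any $P_4$ lives inside one group, impossible since groups have $\leq 3$ vertices) or all structured so that a spanning $P_4$ would need a group to contribute exactly one ``middle'' vertex, which the gadget's geometry forbids.

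The real content is in verifying (ii): for every black or white edge $e$ of $T$, the trigraph $\flip{T}{e}$ has a realization containing an induced $P_4$. I would split this into cases by the location of $e$: $e$ inside a group, or $e$ between two groups. For an intra-group edge, flipping it gives a group with two gray edges and one fixed edge; I need to choose the gadget so that some realization of this enlarged-gray group, together with a suitable realization of the rest, yields a $P_4$ — likely using two vertices of the flipped group plus two vertices reached through the between-group edges. For an inter-group edge $e=v w$ with $v$ in group $A$ and $w$ in group $B$, making $e$ gray lets me realize $e$ as black and then extend to a $P_4$ using the gray edge inside $A$ (realized appropriately) on one side and a vertex of $B$ on the other. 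I expect the main obstacle to be choosing the gadget and the between-group edge pattern simultaneously so that \emph{both} (i) and (ii) hold for \emph{every} edge — these pull in opposite directions (more white edges help (i) but make (ii) harder, since a $P_4$ realization needs enough black edges to exist). Once a working pattern is found, the remaining work — the three residue cases and the exact count $\lceil (n+1)/3\rceil$ — is a routine check, and the small cases $n=4,5,6$ can be done by hand.
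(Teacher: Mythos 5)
Your proposal correctly identifies the shape of the task (exhibit an explicit trigraph with $\isatno{n}$ gray edges and verify both saturation conditions), but the one concrete design you offer provably fails, and the part you defer to ``pinning down by trial'' is exactly where the content of the lemma lives. If the groups have at most $3$ vertices and all between-group edges are white, then flipping a black or white edge $e$ \emph{inside} a group can never create an induced $P_4$: a $P_4$ needs four vertices, at most three are available in the group containing $e$, and any fourth vertex is joined to that group entirely by white edges, hence isolated from the other three in every realization. More generally, any design in which some group is separated from the rest by a monochromatic cut is doomed, since by Fact~\ref{CompleteDisjoint} that group would itself have to be $\indsat{P_4}$, and a $3$-vertex trigraph is $\indsat{P_4}$ only if it is completely gray. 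So condition (ii) forces a genuinely non-uniform edge pattern between the gray pieces, and patching the residue classes of $n$ does not address this.

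The paper's construction resolves precisely this tension, and it does not look like chained copies of a $3$-vertex gadget. The gray edges sit on disjoint pairs $a_ib_i$ (plus one extra gray edge $c_0c_1$ when $n\equiv 0\bmod 3$), all edges among the $a$'s and $b$'s are white, the auxiliary vertices $c_1,\dots,c_k$ form a black clique, and the edges between $c_i$ and $\{a_j,b_j\}$ follow a threshold (``half-graph'') rule: black when $i\le j$, white when $i>j$. This nesting is what makes every flip productive --- for example flipping $c_ic_j$ yields the realization $a_ic_ia_jc_j$, and flipping $c_ia_j$ with $i>j$ yields $b_ja_jc_ia_i$ --- while the same nesting guarantees that every four-vertex set of the unflipped trigraph contains a black triangle, a white $C_4$, or a vertex of degree $0$ or $3$, so no realization contains an induced $P_4$. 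Designing that asymmetric between-piece pattern is the actual work of the upper bound; your proposal acknowledges the difficulty but does not supply a pattern that works.
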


\begin{proof}
We have three constructions depending on the remainder of $n$ upon division by $3$. We will denote the trigraph construction on $n$ vertices by $T_n'$.

\begin{cons} The trigraph $T_n'$ for $n\geq 4$ has the following vertex set:
$$ V(T_n')=\left\{\begin{array}{ll}
                  \bigcup_{i=1}^k\{a_i,b_i,c_i\}\cup\{a_{k+1},b_{k+1}\}, & \mbox{if $n=3k+2$;} \\
                  \bigcup_{i=1}^k\{a_i,b_i,c_i\}\cup\{a_{k+1},b_{k+1}\}\cup\{c_0\}, & \mbox{if $n=3k+3$;} \\
                  \bigcup_{i=1}^k\{a_i,b_i,c_i\}\cup\{a_{k+1},b_{k+1}\}\cup\{a_0,b_0\}, & \mbox{if $n=3k+4$;} \end{array}\right. $$
The edges are colored as follows:
$$ \begin{array}{rl}
      a_ib_i\in EG(T_n'), & \forall i; \\
      c_0c_1\in EG(T_n'), & \mbox{if $n\equiv 0\bmod{3}$;} \\
      a_ia_j,a_ib_j,b_ib_j\in EW(T_n'), & \forall i\neq j; \\
      c_ic_j\in EB(T_n'), & \forall i\neq j, \{i,j\}\neq\{0,1\}; \\
      c_ia_j,c_ib_j\in EB(T_n'), & \forall i\leq j; \\
      c_ia_j,c_ib_j\in EW(T_n'), & \forall i>j. \end{array} $$
\end{cons}

\begin{remark} The subtrigraph of $T_n'$ induced by $\bigcup_{i=1}^k\{a_i,b_i,c_i\}$ resembles the so-called half-graph. In fact $\bigcup_{i=1}^k\{a_i,c_i\}$ is the half-graph on $2k$ vertices if black edges are interpreted as the edges of the graph and white edges are interpreted as the nonedges.
\end{remark}

The various constructions are shown in Figures~\ref{P4ConstructionTwo},~\ref{P4ConstructionZero} and~\ref{P4ConstructionOne}.  It is easy to compute that, in each case, there are exactly $\left\lceil\frac{n+1}{3}\right\rceil$ gray edges in each $T_n'$ for $n\geq 4$.

\begin{figure}[ht]
   \centering
   \includegraphics[height=\figheight]{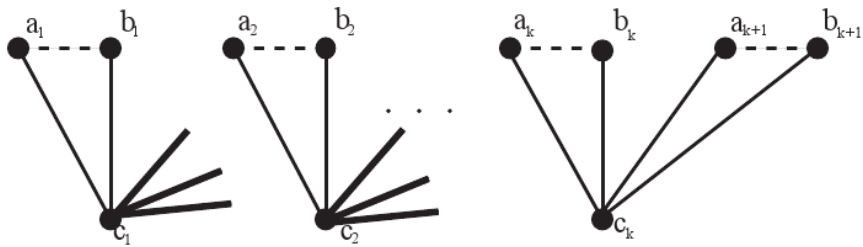}
   \caption{$T_n'$ when $n=3k+2$}
   \label{P4ConstructionTwo}
\end{figure}
\begin{figure}[ht]
   \centering
   \includegraphics[height=\figheight]{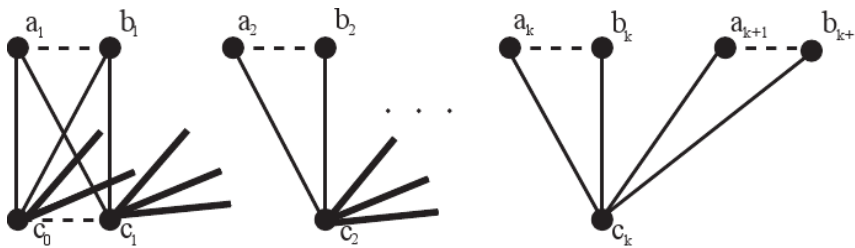}
   \caption{$T_n'$ when $n=3k+3$}
   \label{P4ConstructionZero}
\end{figure}
\begin{figure}[ht]
   \centering
   \includegraphics[height=\figheight]{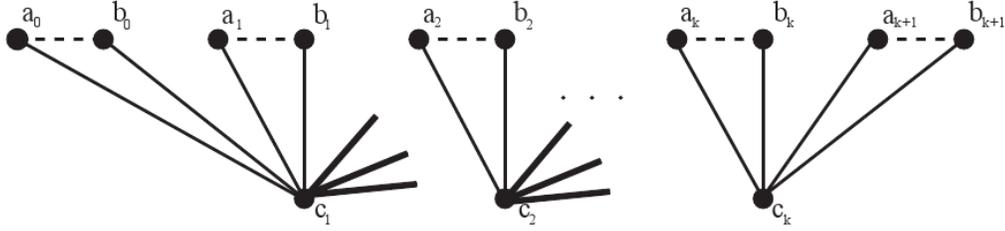}
   \caption{$T_n'$ when $n=3k+4$}
   \label{P4ConstructionOne}
\end{figure}

It remains to show that, for $n\geq 4$, the graph $T_n'$ is $\indsat{P_4}$.  Recall to do this, first we must show that $T$ does not contain a realization which has a $P_4$ as an induced subgraph.  Second, we must show that if we flip any white or black edge to gray, then $T$ does contain a realization which has a $P_4$ as an induced subgraph.  We leave it to the reader to verify this in the case of $n=4$, in which $T_4'$ is two disjoint gray edges and a white 4-cycle. So, we may assume $n\geq 5$, i.e., $k\geq 1$.



First, we show that $T_n'$ does not contain an induced $P_4$.  If $n=3k+3$, then if $c_0$ is in such a realization, then so must $c_1$ because $c_0x\in EB(T_n')$ if $x\neq c_1$.  But $\{c_0,c_1,x,y\}$ must contain a black $C_4$ for all distinct $x,y\not\in\{c_0,c_1\}$, hence no $P_4$ realization is possible.  So, $c_0$ is in no realization of a $P_4$.  In addition, no realization of $P_4$ can have $3$ or $4$ vertices from $\{c_1,\ldots,c_k\}$ because they induce a black $C_3$.  If we have four distinct vertices $\{c_i,c_j,x,y\}$, where $1\leq i<j\leq k$, then there is either a black $C_3$ or a white $C_4$, hence no realization of $P_4$.  If we have no vertices from $\{c_1,\ldots,c_k\}$, then we have a white $C_4$ and so no realization of $P_4$. Finally, consider four distinct vertices $\{c_i,x,y,z\}$ where $x,y,z\not\in\{c_1,\ldots,c_k\}$.  We may assume that two of $\{x,y,z\}$ are adjacent via a gray edge, otherwise there is a white $C_3$. Without loss of generality, the vertices are of the form $\{c_i,a_j,b_j,a_{\ell}\}$. Regardless of the values of $j$ and $\ell$ relative to $i$, there is either a black star with $3$ leaves, a white star with $3$ leaves or a white $C_4$, all of which preclude a realization of $P_4$.

Second, we show that if any black or white edge in $T_n'$ is flipped to gray then the new trigraph does contain an induced $P_4$.  We will take care of this with case analysis, focusing primarily on Figure~\ref{P4ConstructionTwo} first and then filling in the missing edges for the other two constructions.  One observation we will make is that if trigraph $T$ contains no realization of $P_4$ but $\flip{T}{e}$ does, for some white edge $e$, then the realization must use $e$ as a black edge.  The complementary observation (flipping a black edge requires it be used as a white edge) holds as well.

\begin{enumerate}
   \item \textbf{Flip $a_ia_j$:} We consider $\flip{T_n'}{a_ia_j}$ for $i<j$, which flips $a_ia_j$ from white to gray. In this case we have an induced $P_4$ with vertices $b_ia_ia_jb_j$. By symmetry, we conclude that $\flip{T_n'}{a_ib_j}$ and $\flip{T_n'}{b_ib_j}$ have a realization of $P_4$, for all $i\neq j$.

   \item \textbf{Flip $c_ic_j$:} We consider $\flip{T_n'}{c_ic_j}$ for $i<j$ and $\{i,j\}\neq\{0,1\}$, which flips $c_ic_j$ from black to gray. In this case we have an induced $P_4$ with vertices $a_ic_ia_jc_j$.

   \item \textbf{Flip $c_ia_i$:} We consider $\flip{T_n'}{c_ia_i}$, which flips $c_ia_i$ from black to gray. In this case we have an induced $P_4$ with vertices $a_ib_ic_ia_{i+1}$. By symmetry, we conclude that $\flip{T_n'}{c_ib_i}$ has a realization of $P_4$, for all $i$.

   \item \textbf{Flip $c_ia_j$, $i<j$:} We consider $\flip{T_n'}{c_ia_j}$ for $i<j$, which flips $c_ia_j$ from black to gray. In this case we have an induced $P_4$ with vertices $a_ic_ib_ja_j$. By symmetry, we conclude that $\flip{T_n'}{c_ib_j}$ has a realization of $P_4$, for all $i<j$.

   \item \textbf{Flip $c_ia_j$, $i>j$:} We consider $\flip{T_n'}{c_ia_j}$ for $i>j$, which flips $c_ia_j$ from white to gray. In this case we have an induced $P_4$ with vertices $b_ja_jc_ia_i$. By symmetry, we conclude that $\flip{T_n'}{c_ib_j}$ has a realization of $P_4$, for all $i>j$.
\end{enumerate}

Therefore, $T_n'$ is $\indsat{P_4}$ for all $n\geq 5$ and, together with the case $n=4$, we are done.
\end{proof}


\section{Proof of lower bound in the main result}
\label{sec:lowerbd}

In this section, we prove the lower bound found in Theorem~\ref{TheoremISatNoForP4}:
\begin{thm}\label{TheoremP4Smallest} $\isat{n}{P_4}\ge\lceil\frac{n+1}{3}\rceil$. \end{thm}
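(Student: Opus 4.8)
The plan is to show that any $\indsat{P_4}$ trigraph $T$ on $n$ vertices must have at least $\isatno{n}$ gray edges by analyzing the structure of its gray components. The key observation is that the ``flip'' condition forces gray components to be small and forces them to cover most of the vertex set: if a black or white edge $e$ has both endpoints in vertices that are ``far'' from any gray edge, flipping $e$ to gray cannot create an induced $P_4$, since a realization of $P_4$ must pass through an edge that is genuinely ambiguous (gray) in $\flip{T}{e}$, and there are none nearby. Concretely, I would first establish (this is essentially the content of the later Section~\ref{sec:proofgrayexists}, which I may invoke) that $T$ has at least one gray edge, and more importantly that every vertex is ``close'' to the gray edges in the sense that the black/gray or white/gray structure around a gray-edge-free region is too rigid to ever realize $P_4$ upon a single flip.

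The main step is a charging argument: I would partition $V(T)$ according to the gray components, and show that each gray component with $g$ gray edges can ``account for'' at most $3g$ vertices — or, put differently, a gray component spanning $m$ vertices contains at least $\lceil (m-1)/3 \rceil$ gray edges, with a global $+1$ adjustment coming from the fact that the vertices not in any nontrivial gray component impose additional constraints. Summing $\lceil (m_i-1)/3 \rceil$ over the components and accounting for isolated (gray-degree-zero) vertices should yield the bound $\isatno{n}$. The ratio $3$ is exactly what appears in the upper bound construction $T_n'$, where the half-graph gadget uses one gray edge $a_ib_i$ per three vertices $\{a_i,b_i,c_i\}$, so the extremal structure is the guide: each gray edge $a_ib_i$ is ``responsible'' for the block it sits in, and the leftover vertices $\{a_{k+1},b_{k+1}\}$ plus the extra $c_0$ or $a_0,b_0$ force the ceiling and the $+1$.

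The hard part will be controlling \emph{how few} gray edges a large gray component can have while still forcing every nearby black/white flip to create an induced $P_4$ — equivalently, ruling out clever trigraphs where one long gray path or sparse gray tree ``serves'' many vertices with few gray edges. This is where the bulk of the work lives: I would need the general facts about $\indsat{P_4}$ trigraphs (the ``facts'' collected in Section~\ref{sec:facts}) describing the rigid black/white structure surrounding gray components — for instance, that the non-gray neighborhoods of the two endpoints of a gray edge must be structured so that flipping any incident non-gray edge completes a $P_4$, which pins down the local picture and prevents a single gray edge from being adjacent to too many vertices without forcing a realization already (contradicting that $T$ has no realization of $P_4$). With those structural facts in hand, the counting is then a finite case analysis on the ``type'' of each gray component, and the $+1$ in $\isatno{n}$ emerges from a boundary/parity argument about the component(s) containing the extra vertices. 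I expect to defer the detailed verification of these local-structure facts to the subsequent sections and present here the skeleton: (i) gray edges exist and cover enough; (ii) each gray edge is charged $\le 3$ vertices via the rigid surrounding structure; (iii) sum up and apply the ceiling, adding $1$ from the overhang.
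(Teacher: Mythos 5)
Your proposal correctly identifies the target ratio and correctly guesses that the structural facts about gray components (stars/triangles, and the rigid black/white neighborhoods around them) are the needed ingredients, but the central counting mechanism you describe — a local charging argument in which each gray edge is ``responsible'' for at most $3$ vertices of its own component's neighborhood — is not how the bound arises, and I do not see how to make it work. The difficulty is not, as you suggest, ``how few gray edges a large gray component can have'': by Fact~\ref{FactNoGrayP4} every nontrivial gray component is a star or a triangle, so a component on $m$ vertices automatically has at least $m-1$ gray edges, far more than $\lceil(m-1)/3\rceil$. The real difficulty is the opposite one: a single small gray component $C_0$ can coexist with arbitrarily many vertices that touch no gray edge of $C_0$ at all, and those vertices cannot be ``charged'' to $C_0$'s gray edges. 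One must instead show that the gray edges they need live \emph{among themselves}. The paper does this by partitioning $V(T)-C_0$ into $X$ (all black to $C_0$), $Y$ (all white to $C_0$) and $Z$ (mixed), proving that $T[Z]$ is itself $\indsat{P_4}$ so that a strong induction on $n$ applies to it, and proving a separate technical lemma (Lemma~\ref{TechnicalLemma}) that forces $\lceil(|X|+|Y|)/3\rceil$ gray edges inside $T[X\cup Y]$ even though $T[X,Y]$ has no gray edges. That lemma is the bulk of the work: it decomposes $X$ and $Y$ into equivalence classes by nested black neighborhoods, shows each class is $\indsat{P_4}$ (hence recursively rich in gray edges), and rules out too many singleton classes by explicit flip arguments.

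So the gap is concrete: your step (ii), ``each gray edge is charged $\le 3$ vertices via the rigid surrounding structure,'' has no identified mechanism, and the natural attempt to localize the count around gray components fails because most vertices need not be adjacent to any gray edge. Without (a) the $X/Y/Z$ decomposition relative to a chosen gray component (chosen to maximize $|Z|$, which is what kills gray edges between $X$ and $Y$), (b) the recursion showing $T[Z]$ is again $\indsat{P_4}$, and (c) the technical lemma handling the gray-free bipartite interface between $X$ and $Y$, the skeleton you present does not assemble into a proof. The ``$+1$ overhang'' also does not come from a parity/boundary argument about leftover vertices; it comes from the $|C_0|-1$ gray edges of the component itself combining with the two recursive counts in the final case analysis.
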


The proof is done by strong induction and bulk of the work will be done by the technical lemma, Lemma~\ref{TechnicalLemma}. Before the main proof, some basic facts will be enumerated in Section~\ref{sec:facts}. Recall that a trigraph $T$ that is $\indsat{P_4}$ with $|V(T)|\leq 3$ is a complete gray graph.  To ease the statements of the proof, we state a few definitions:

Let $T$ be a trigraph. A star in some color class of $T$ is a \textit{trivial star} if it has two (or fewer) vertices.  If a star has three or more vertices, it is called a \textit{nontrivial star}. A connected component of a color class of $T$ is a \textit{trivial component} if it consists of a single vertex. If it consists of at least two vertices, it is called a \textit{nontrivial component}.


\subsection{General Facts}
\label{sec:facts}

The proofs of the following facts will appear in Section~\ref{sec:factproofs}.  We state them here as they provide a feel for how the proof of the main result will be handled.

\begin{fact}\label{SelfComplementary} If a trigraph $T$ is $\indsat{P_4}$, then the complement of $T$, $\comp{T}$, is also $\indsat{P_4}$.
\end{fact}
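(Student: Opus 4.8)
The plan is to exploit the fact that $P_4$ is self-complementary, i.e.\ $\comp{P_4}\cong P_4$, together with the observation that complementation is compatible with both realizations and edge-flips. Concretely, I would first establish the dictionary: a graph $G$ on $V(T)$ is a realization of $\comp{T}$ if and only if $\comp{G}$ is a realization of $T$. This is immediate from the definitions: a realization of $T$ is obtained by choosing $S\subseteq EG(T)$ and setting $E(G)=EB(T)\cup S$, so its nonedges are $EW(T)\cup(EG(T)\setminus S)$; taking $S'=EG(T)\setminus S$, the graph $\comp{G}$ has edge set $EW(T)\cup S'=EB(\comp{T})\cup S'$ with $S'\subseteq EG(\comp{T})$, which is exactly the general form of a realization of $\comp{T}$, and this correspondence is a bijection. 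Since a graph contains an induced $P_4$ exactly when its complement does (because $\comp{P_4}\cong P_4$), it follows that $T$ has a realization of $P_4$ if and only if $\comp{T}$ has a realization of $P_4$.

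Next I would verify that complementation commutes with flipping. If $e$ is a black (resp.\ white) edge of $T$, then $e$ is a white (resp.\ black) edge of $\comp{T}$, and $\comp{\flip{T}{e}}=\flip{\comp{T}}{e}$, since flipping merely recolors $e$ to gray and the gray edge set is fixed under complementation; checking the black/white/gray edge sets of both sides confirms they agree.

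With these two facts in hand, the two defining properties of $\indsat{P_4}$ transfer directly. Since $T$ has no realization of $P_4$, the dictionary shows $\comp{T}$ has none either. For the second property, let $e$ be any black or white edge of $\comp{T}$; then $e$ is a white or black edge of $T$, so because $T$ is $\indsat{P_4}$ the trigraph $\flip{T}{e}$ has a realization $G$ with an induced $P_4$. Applying the dictionary to $\flip{T}{e}$ (which is legitimate for any trigraph) together with $\comp{\flip{T}{e}}=\flip{\comp{T}}{e}$ shows that $\comp{G}$ is a realization of $\flip{\comp{T}}{e}$ containing an induced $\comp{P_4}\cong P_4$. Hence $\comp{T}$ is $\indsat{P_4}$.

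I do not expect any real obstacle here; the only point requiring care is making the realization--complement correspondence precise (that a realization of $\comp{T}$ is exactly the complement of a realization of $T$, as a bijection) and confirming that it respects ``containing an induced $P_4$''. Both are direct consequences of the definition of realization, the definition of trigraph complement, and the self-complementarity of $P_4$, so the argument is essentially bookkeeping.
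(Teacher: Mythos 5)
Your proposal is correct and follows essentially the same route as the paper's proof: both rest on the bijection between realizations of $\comp{T}$ and complements of realizations of $T$, the self-complementarity of $P_4$, and the compatibility of complementation with the flip operation. Your write-up simply makes explicit the bookkeeping that the paper's (much terser) proof leaves implicit.
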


\begin{fact} \label{CompleteDisjoint} Let $T$ be a trigraph which is $\indsat{P_4}$. If $V(T)$ is partitioned into $V_1$ and $V_2$ such that either each edge in $T[V_1,V_2]$ is white or each edge in $T[V_1,V_2]$ is black, then both $T[V_1]$ and $T[V_2]$ are $\indsat{P_4}$.
\end{fact}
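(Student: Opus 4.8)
The plan is to prove the contrapositive-flavored statement directly: assuming $T$ is $\indsat{P_4}$ and $V(T) = V_1 \cup V_2$ with $T[V_1,V_2]$ monochromatic, I would verify the two defining properties of $\indsat{P_4}$ for each $T[V_i]$ separately. By Fact~\ref{SelfComplementary} (self-complementarity of the $\indsat{P_4}$ property) together with the fact that complementation swaps ``all edges in $T[V_1,V_2]$ white'' with ``all edges in $T[V_1,V_2]$ black,'' it suffices to treat one case, say every edge of $T[V_1,V_2]$ is white; the black case follows by passing to $\comp{T}$, applying the white case to $\comp{T}[V_1]$ and $\comp{T}[V_2]$, and complementing back.

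First I would establish that no realization of $T[V_1]$ contains an induced $P_4$. Suppose some realization $G_1$ of $T[V_1]$ had an induced $P_4$ on four vertices of $V_1$. Pick any realization $G_2$ of $T[V_2]$ and form the realization $G$ of $T$ whose edge set is $E(G_1) \cup E(G_2)$ together with the (necessarily white, hence absent) pairs across $V_1,V_2$ — i.e., set every gray edge of $T[V_1,V_2]$, if any, to white. Since $T[V_1,V_2]$ is entirely white there are in fact no gray edges across, so $G$ is a legitimate realization of $T$, and $G[V_1] = G_1$ contains the induced $P_4$; this contradicts that $T$ has no realization of $P_4$. Hence $T[V_1]$ has no realization of $P_4$, and symmetrically neither does $T[V_2]$.

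Second I would show that flipping any black or white edge $e$ of $T[V_1]$ to gray creates a realization of $P_4$. The key observation is that $e$ is also an edge of $T$ (the vertex set and the non-crossing edge colors are unchanged), so $\flip{T}{e}$ is $\indsat{P_4}$'s ``one flip'' situation: there is a realization $G'$ of $\flip{T}{e}$ with an induced $P_4$, and by the observation recorded in the upper-bound section this $P_4$ must use $e$ in the opposite color. Now I claim the four vertices of this $P_4$ all lie in $V_1$: they cannot be split nontrivially between $V_1$ and $V_2$ because all crossing pairs are white (non-edges), so any vertex in $V_2$ in the copy would be non-adjacent to all three others, impossible in $P_4$ unless it is an isolated vertex of the induced subgraph — which $P_4$ has none of — while having all four vertices in $V_2$ is impossible since the flipped edge $e$ lies inside $V_1$. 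Therefore $G'[V_1]$ is a realization of $\flip{T[V_1]}{e}$ containing an induced $P_4$, as required; the argument for $T[V_2]$ is identical.

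The main obstacle is the case split at the crossing edges — but it is minor, and is dispatched cleanly by invoking Fact~\ref{SelfComplementary} so that only the white case needs explicit treatment. The only other point requiring a moment's care is checking that gluing realizations of $T[V_1]$ and $T[V_2]$ along the monochromatic cut yields a genuine realization of $T$ (equivalently, that $T[V_1,V_2]$ contains no gray edges, which is exactly the hypothesis), and that an induced $P_4$ cannot straddle a white cut; both follow immediately from the structure of $P_4$ as a connected graph with no isolated vertices.
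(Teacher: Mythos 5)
Your proposal is correct and follows essentially the same route as the paper's proof: reduce to the all-white cut via Fact~\ref{SelfComplementary}, then observe that any induced $P_4$ arising from flipping an edge inside $V_1$ must lie entirely within $V_1$ because a $P_4$ is connected and cannot straddle a white cut. You are in fact slightly more explicit than the paper, which leaves the ``no realization of $T[V_1]$ already contains a $P_4$'' direction implicit and handles the small-$|V_1|$/complete-gray case by a separate remark rather than letting the flip argument absorb it.
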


Let $T$ be a trigraph with distinct vertices $v_1$ and $v_2$ and $S\subseteq V(T)-\{v_1,v_2\}$. We say \textit{$v_1$ and $v_2$ have the same neighborhood in $S$} if $v_1s$ and $v_2s$ have the same color for every $s\in S$.

\begin{fact} \label{FactSameBehavior} Let $T$ be a trigraph that is $\indsat{P_4}$. Let $S$ be a subset of $V=V(T)$ such that $T[S,V-S]$ has no gray edges. If every pair of vertices in $S$ has the same neighborhood in $V-S$, then $S$ is $\indsat{P_4}$.
\end{fact}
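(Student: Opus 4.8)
The plan is to verify the two defining conditions of $\indsat{P_4}$ for $T[S]$ directly, using $T$ itself as a witness and exploiting the hypothesis that $S$ is homogeneous toward $V-S$ (all pairs in $S$ have the same neighborhood there) and that $T[S,V-S]$ is gray-free. Write $V=V(T)$ and $W=V-S$. First, since $T$ has no realization containing an induced $P_4$, and every realization of $T[S]$ extends (by appending the fixed edges of $T[W]$ and $T[S,W]$, both of which have no gray edges, hence are already determined) to a realization of $T$, no realization of $T[S]$ can contain an induced $P_4$ either. That handles the ``no realization of $P_4$'' half.

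For the second half, let $e=xy$ be a black or white edge of $T[S]$; I must show $\flip{T[S]}{e}$ has a realization with an induced $P_4$. Consider $\flip{T}{e}$ in the full trigraph: by hypothesis it has a realization $G$ with an induced $P_4$, say on vertex set $Q=\{q_1,q_2,q_3,q_4\}$, and by the complementary observation recorded in Section~\ref{sec:upperbd} this $P_4$ must use $e$ (as a black edge if $e$ was white in $T$, as a white edge if $e$ was black), so $\{x,y\}\subseteq Q$. The key step is to argue we may take $Q\subseteq S$. Since $x,y\in S$, at most two vertices of $Q$ lie in $W$; I want to push any such vertex into $S$. Pick any vertex $w\in Q\cap W$ and any vertex $s\in S\setminus Q$ — such an $s$ exists provided $|S|\ge 5$, and the small cases $|S|\le 4$ where $T[S]$ could fail to be a complete gray graph will be checked by hand or dispatched via the earlier remark. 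Because $T[S,W]$ is gray-free, the edges from $w$ and from $s$ to the vertices of $Q\cap S$ are \emph{already colored black or white in $T$} (not gray), and are the same colors for $w$ and $s$ by the homogeneity hypothesis applied pairwise within $S$; moreover $w$ and $s$ each have the identical (monochromatic-per-hypothesis) relationship to the vertices of $Q\cap S$. Hence swapping $s$ for $w$ in $Q$ yields four vertices inducing the same configuration in $G$ — still an induced $P_4$ using $e$. Iterating, we obtain an induced $P_4$ on a set $Q'\subseteq S$ in the realization $G[S]$ of $\flip{T}{e}[S]=\flip{T[S]}{e}$, which is exactly what is required.

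The main obstacle is the vertex-swapping argument: one must be careful that replacing $w\in Q\cap W$ by $s\in S$ does not accidentally destroy the induced $P_4$ — this is where both hypotheses are essential. The gray-free condition on $T[S,W]$ guarantees the relevant edges are not ``free'' to change between realizations, so the colors seen in $G$ agree with the colors in $T$; and the same-neighborhood condition on $S$ guarantees $s$ and $w$ look identical to every vertex of $Q\cap S$. A secondary subtlety is the boundary case $|Q\cap W|=2$: then one removes both $W$-vertices using two distinct vertices of $S\setminus Q$, again needing $|S|$ large enough, but since $T$ is $\indsat{P_4}$ and $|S|<4$ forces $T[S]$ to be complete gray (by Fact~\ref{CompleteDisjoint} applied with $V_2=\emptyset$, or simply by the Remark), only $|S|\ge 4$ needs genuine argument, and there the count $|S\setminus Q|\ge |S|-4$ together with a direct check at $|S|\in\{4,5\}$ closes the gap.
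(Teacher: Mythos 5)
Your first half (no realization of $T[S]$ contains an induced $P_4$) is fine, since any realization of $T[S]$ extends to a realization of $T$; note, though, that $T[W]$ may well contain gray edges --- only $T[S,W]$ is assumed gray-free --- which is harmless here but is misstated in your write-up.

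The second half has a genuine gap: the vertex-swapping step reads the homogeneity hypothesis in the wrong direction. The hypothesis says that for every pair $s,s'\in S$ and every $w\in W$, the edges $sw$ and $s'w$ have the same color; equivalently, each $w\in W$ is \emph{monochromatic toward $S$}. It does \emph{not} say that a vertex $s\in S\setminus Q$ and a vertex $w\in Q\cap W$ have the same adjacencies to the vertices of $Q\cap S$: the edges from $s$ to $Q\cap S$ lie entirely inside $T[S]$, are completely unconstrained by the hypotheses, and may even be gray. So replacing $w$ by $s$ can destroy the induced $P_4$, and the iteration does not go through; the auxiliary worries about $|S|\ge 5$ and the small cases are symptoms of this wrong turn (and the appeal to Fact~\ref{CompleteDisjoint} with $V_2=\emptyset$ does not yield what you claim). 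The correct use of the hypothesis is to show that $Q\cap W=\emptyset$ outright, which is what the paper does: since $x,y\in Q\cap S$, either $|Q\cap W|=1$, in which case the unique $w\in Q\cap W$ sees the three vertices of $Q\cap S$ in a single color and so has degree $0$ or $3$ in the $P_4$, a contradiction; or $|Q\cap W|=2$, in which case the four cross edges between $\{x,y\}$ and the two $W$-vertices are forced (by monochromaticity of each $W$-vertex toward $S$) to produce a $C_4$, a $\comp{C_4}$, or a vertex of degree $0$ or $3$, again a contradiction. No swapping and no lower bound on $|S|$ is needed.
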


By Fact~\ref{FactNoGrayP4}, the gray components of size at least 2 in $\indsat{P_4}$ trigraphs are either $K_3$'s or $S_k$'s for $k\geq 2$.
\begin{fact} \label{FactNoGrayP4}
If $T$ is a $\indsat{P_4}$ trigraph on $n\geq 2$ vertices, then each nontrivial gray component in $T$ is either a complete gray $K_3$ or a gray star, $S_k$, $k\ge2$, where $S_k$ denotes the star on $k$ vertices.
\end{fact}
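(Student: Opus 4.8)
The plan is to show that a nontrivial gray component $C$ of a $\indsat{P_4}$ trigraph $T$ can contain neither an induced gray $P_3$ that extends to a larger gray tree nor a gray $C_4$ (or longer gray cycle), so that the only possibilities left are a gray star or a gray triangle. The starting point is the observation underlying the whole problem: if $e=xy$ is a gray edge of $T$, then since $T$ contains no realization of $P_4$, in particular the realization in which $e$ is set to black (and, independently, the one in which $e$ is set to white) contains no induced $P_4$. More useful is the contrapositive packaging of the saturation condition together with Fact~\ref{FactNoGrayP4}'s companion facts: gray edges are exactly the ``free'' pairs, so any four vertices spanning a set of gray edges whose black/white completion could be $P_4$ would already give a realization of $P_4$. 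Concretely, four vertices $v_1v_2v_3v_4$ with $v_1v_2,v_2v_3,v_3v_4$ gray and $v_1v_3,v_1v_4,v_2v_4$ white would realize $P_4$ by setting the three gray path-edges black; hence no $\indsat{P_4}$ trigraph contains such a configuration. Similarly a gray $P_4$ on its own already realizes an induced $P_4$.

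First I would rule out a gray path on four vertices inside $C$: if $v_1v_2v_3v_4$ is an induced gray path, set each of these three edges to black and all other pairs among $\{v_1,v_2,v_3,v_4\}$ arbitrarily—wait, the other pairs need to become nonedges, so I should instead argue that whatever colors $v_1v_3, v_1v_4, v_2v_4$ currently have, at least one realization makes $\{v_1,v_2,v_3,v_4\}$ induce $P_4$; but those three pairs might be black. So the cleaner route is: suppose $C$ is a nontrivial gray component that is neither a star nor a triangle. Since $C$ is connected with $\geq 2$ vertices and is not a star, it contains two independent gray edges joined by a gray path, and by taking a shortest such configuration one finds either an induced gray $P_4$ or a gray triangle with a pendant gray edge, i.e. vertices $v_1v_2v_3$ forming a gray triangle and $v_3v_4$ gray with $v_4\notin\{v_1,v_2\}$. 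In the first case, I exhibit a realization of $P_4$ directly (set the path edges black and argue the three chords cannot all be present—using that $T$ has no induced $P_4$ as a graph would force structure). In the triangle-with-pendant case, consider the four vertices $v_1,v_2,v_3,v_4$: the edges $v_1v_2,v_2v_3,v_1v_3,v_3v_4$ are gray, so I am free to choose, e.g., $v_1v_3$ black, $v_2v_3$ white, $v_3v_4$ black, $v_1v_2$ white, and then $v_2v_1v_3v_4$ realizes $P_4$ provided $v_2v_4$ and (the already-chosen) pairs cooperate; if $v_2v_4$ or $v_1v_4$ is non-gray I adapt the choice of which triangle edge plays which role. This is a small finite case check over the color of $v_1v_4$ and $v_2v_4$.

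Next I would rule out a gray cycle of length $\geq 4$: a gray $C_4$ $v_1v_2v_3v_4v_1$ has $v_1v_2,v_2v_3,v_3v_4,v_4v_1$ gray, so setting $v_1v_2,v_2v_3,v_3v_4$ black and $v_4v_1$ white gives an induced $P_4$ on $v_1v_2v_3v_4$ as long as $v_1v_3$ and $v_2v_4$ are white in the chosen realization; if either is gray I set it white, and if either is genuinely black I rechoose which three consecutive edges of the $C_4$ to blacken (there are four rotations, and a $C_4$ has only two chords, so some rotation avoids a black chord). A longer gray cycle contains an induced gray $P_4$ as a subpath once we've excluded $C_4$'s—or more directly, any connected gray graph that is not a star and not a triangle contains an induced $P_4$ or a triangle-plus-pendant-edge, which is the standard characterization of $\{P_4\}$-free-and-triangle... actually it is exactly the statement that the only connected graphs with no induced $P_4$ and no induced "paw" subgraph on a triangle... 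I would instead phrase it combinatorially: if a connected graph $G$ on $\geq 2$ vertices is not a star, pick a vertex $v$ of maximum degree; if some two neighbors $a,b$ of $v$ are nonadjacent and some neighbor is adjacent to a vertex outside $N[v]$, trace a $P_4$; the remaining cases force $G$ to be a clique, and a gray clique on $\geq 4$ vertices again realizes $P_4$.

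The main obstacle is the bookkeeping in the triangle-with-a-pendant case and the gray-clique-on-$\geq 4$-vertices case: one must verify that among the several gray edges available there is always an assignment making exactly a $P_4$ appear, and this requires checking the (few) colors of the non-gray pairs among the four chosen vertices. I expect the argument to be a short but slightly tedious enumeration; everything else reduces to the single principle that a set of four vertices whose gray edges can be completed to $P_4$ already witnesses a realization of $P_4$, contradicting that $T$ is $\indsat{P_4}$.
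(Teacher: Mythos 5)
There is a genuine gap at exactly the step you flag and then set aside: four vertices $a,b,c,d$ with $ab,bc,cd$ gray, where the chords $ac,bd,ad$ may be black. Your proposed fix --- ``set the path edges black and argue the three chords cannot all be present'' --- points in the wrong direction: all three chords can perfectly well be black, and in that situation the realization of $P_4$ is found by using the black chords as the path (e.g.\ $c\,a\,d\,b$) and setting the gray edges $ab,bc,cd$ to white, i.e.\ by passing to the complement. The paper handles this uniformly: $P_4$ is self-complementary and the complement of a $\indsat{P_4}$ trigraph is again $\indsat{P_4}$ (Fact~\ref{SelfComplementary}); since gray chords can always be set white, one may assume at most one chord is black, and a four-case check (choosing the right ordering of $a,b,c,d$ for each possible black chord) exhibits a realization of $P_4$. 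Without that reduction your ``small finite case check'' has to cover all colorings of the three chords, and you have carried out none of them; the one sample assignment you do write down (in the triangle-with-pendant case, $v_1v_2$ white yet $v_2v_1v_3v_4$ claimed to be a path) is not internally consistent.

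There is also a structural inefficiency that hides a second gap: your case split into ``induced gray $P_4$, triangle plus pendant gray edge, gray $C_4$, gray clique'' is never shown to exhaust the connected gray graphs that are neither stars nor triangles. The clean reduction, which the paper uses in one line, is that a connected graph on at least two vertices with no $P_4$ as a (not necessarily induced) \emph{subgraph} is a star or a triangle; hence it suffices to rule out three gray edges $ab$, $bc$, $cd$ forming a path in the gray graph, with no hypothesis at all on the remaining three pairs. All of your separate configurations collapse into this single case, which is then dispatched by the complementation argument above.
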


Facts~\ref{StarBehavior} and~\ref{TriangleBehavior} establish a partition of $V(T)$ according to how those vertices relate to a particular gray component. Note that Fact~\ref{StarBehavior} also applies if the gray component is an edge on 2 vertices.
\begin{fact} \label{StarBehavior} Let $T$ be a trigraph that is $\indsat{P_4}$. If $k\geq 2$ and $\{u,v_1,v_2,\dots,v_{k-1}\}$ is a gray star in $T$ with center $u$, then $V(T)-\{u,v_1,v_2,\dots,v_{k-1}\}$ can be partitioned into $X$, $Y$ and $Z$ such that the following occur:
\begin{itemize}
   \item The edges $xu$ and $xv_i$ are black for all $i\in\{1,\ldots,k-1\}$ and for all $x\in X$.
   \item The edges $yu$ and $yv_i$ are white for all $i\in\{1,\ldots,k-1\}$ and for all $y\in Y$.
   \item One of the following occurs:
   \begin{itemize}
      \item The edges $v_iv_j$ and $v_iz$ are white and $uz$ are black for all distinct $i,j\in\{1,\ldots,k-1\}$ and for all $z\in Z$.
      \item The edges $v_iv_j$ and $v_iz$ are black and $uz$ are white for all distinct $i,j\in\{1,\ldots,k-1\}$ and for all $z\in Z$.
   \end{itemize}
\end{itemize}
\end{fact}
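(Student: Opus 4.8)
The plan is to fix a gray star with center $u$ and leaves $v_1,\dots,v_{k-1}$, and first argue that for any vertex $w$ outside the star, the edges $wu, wv_1,\dots,wv_{k-1}$ are all the same color. Indeed, none of these edges can be gray: if $wv_i$ were gray for some leaf $v_i$, then $\{w,v_i,u\}$ together with another leaf $v_j$ (or, if $k=2$, we use a different argument) would contain a gray path on three or four vertices, and by Fact~\ref{FactNoGrayP4} a nontrivial gray component is a $K_3$ or a star — a gray path $w\,v_i\,u$ extended by $u v_j$ gives a gray $P_4$ (forbidden) and $w u$ gray plus $u v_i$ gray plus $u v_j$ gray is a gray star only if $w$ is not adjacent in gray to any $v_i$, which is the case we want to rule out here; so $wv_i$ gray forces a contradiction. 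Hence all of $wu, wv_1,\dots,wv_{k-1}$ are black or white. Next I claim that for a fixed $w$ they cannot be mixed: suppose $wv_i$ is black and $wv_j$ is white for distinct leaves $i,j$. Then consider a realization in which the gray edges $uv_i$ and $uv_j$ are realized suitably — e.g.\ $uv_i$ black, $uv_j$ white, and note $v_iv_j$ is non-gray (it lies outside the gray star's edge set, so it is black or white). In either case one checks that $\{w, v_i, u, v_j\}$, or a slight variant using the center $u$ versus a leaf, becomes an induced $P_4$ in that realization, contradicting that $T$ has no $P_4$-realization. This forces the color of $wu$ to agree with the common color of the $wv_i$.

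Having established that each external vertex $w$ is either "all-black" or "all-white" to the star, we get a partition of $V(T)-\{u,v_1,\dots,v_{k-1}\}$ into $X$ (all-black) and $Y$ (all-white), which already yields the first two bullets. For the third bullet I would examine the edges among the leaves $v_iv_j$ (necessarily non-gray) and the structure of $Z$ — but wait, in the statement $Z$ is simply the rest, and the dichotomy is about whether the leaves are mutually white (the "star" shape, with $uz$ black for $z\in Z$) or mutually black (the complemented shape, with $uz$ white). So in fact I would first determine the color of the leaf-leaf edges: if some $v_iv_j$ is black and some $v_kv_\ell$ (with $\{i,j\}\cap\{k,\ell\}\neq\emptyset$, say $v_iv_j$ black and $v_iv_k$ white) then a realization making $uv_i$ white and $uv_j, uv_k$ black exhibits an induced $P_4$ on $\{v_j, v_i, u, v_k\}$ or similar — so all leaf-leaf edges share one color. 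In the all-white-leaves case, I claim every external vertex is black to $u$ (so $X\cup Z = V-\{u,\dots\}\setminus Y$ with the Z-vertices being exactly... ) — here I should be careful: $Z$ should be defined as those external vertices whose color to $u$ differs from what the $X/Y$ split would predict from the leaf structure, but by the first two bullets every external vertex has $wu$ matching $wv_i$, so actually the $X,Y$ partition is forced and $Z=\emptyset$ unless $k=2$. Re-reading, the intended reading is almost certainly: when $k\ge 3$ the leaf edges are monochromatic and this color, together with the behavior toward $u$, distinguishes the two sub-cases, and $Z$ collects vertices adjacent to $u$ in the "opposite" way; the heart is a short realization-and-$P_4$ argument in each configuration.

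The main obstacle I expect is the bookkeeping in the third bullet — pinning down precisely which realizations to use when there are at least two leaves and a vertex $z$ whose relationship to $u$ is "opposite" to its relationship to the leaves, and verifying in each of the handful of configurations that either a $P_4$-realization already exists (contradiction) or the claimed color pattern is forced. The $k=2$ case (the star is a single gray edge) is degenerate and must be checked separately, since then "leaf-leaf edges" is vacuous and the two sub-cases of the third bullet collapse into the choice of color of $uz$ for $z\in Z$; this is the case the parenthetical remark before the Fact is flagging. Throughout, the only external inputs needed are Fact~\ref{FactNoGrayP4} (gray components are $K_3$'s or stars, so no gray $P_4$ and no gray vertex of degree $\ge 2$ outside a star's center) and the defining property that $T$ has no realization containing an induced $P_4$.
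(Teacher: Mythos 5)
There is a genuine gap, and it sits exactly where the content of the Fact lives. Early in your argument you assert that for an external vertex $w$ the edges $wu,wv_1,\dots,wv_{k-1}$ ``cannot be mixed'' and that your $P_4$-argument ``forces the color of $wu$ to agree with the common color of the $wv_i$.'' That conclusion is false, and your own argument does not deliver it: the configuration you analyze ($wv_i$ black, $wv_j$ white for two \emph{leaves}) only shows that the edges from $w$ to the leaves are mutually monochromatic; it says nothing about $wu$. Indeed, vertices with $wu$ colored oppositely to the $wv_i$ are precisely the set $Z$ in the statement, and they genuinely occur (see, e.g., the construction in Section~\ref{sec:conc}, where $T[Z,\{v_1,v_2\}]$ is black while $T[Z,\{u\}]$ is white). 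Your reading that ``the $X,Y$ partition is forced and $Z=\emptyset$ unless $k=2$'' is therefore wrong, and you notice the tension yourself in the second paragraph but never resolve it. The correct structure is: (i) leaf--leaf edges are monochromatic (a $P_4$ such as $v_1uv_2v_3$ or $uv_3v_1v_2$ kills mixed colors when there are at least three leaves); (ii) for each external $w$ the edges to the \emph{leaves} are monochromatic (via $wv_1uv_2$ or $wv_1v_2u$); (iii) define $Z$ as those $w$ whose edge to $u$ disagrees with their edges to the leaves, and then prove the alignment in the third bullet: if the leaves are mutually white, a $z\in Z$ with $zu$ white and $zv_i$ black yields the realization $v_1zv_2u$ of $P_4$, forcing the stated pattern.

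The second gap is that you never actually carry out the third bullet or the $k=2$ case; you defer both to ``a short realization-and-$P_4$ argument in each configuration'' and ``the main obstacle I expect is the bookkeeping.'' That bookkeeping is the proof. In particular, for $k=2$ the statement is not vacuous: one must show that two vertices $z_1,z_2\in Z$ cannot be oriented oppositely (say $z_1u,z_2v_1$ black and $z_1v_1,z_2u$ white), which is ruled out because either $z_1uv_1z_2$ or $uz_1z_2v_1$ realizes $P_4$ depending on the color of $z_1z_2$. Without these steps, and with the erroneous claim about $wu$, the proposal does not establish the Fact.
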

Note that in the case where $k=2$, Fact~\ref{StarBehavior} still holds, but the subcases for the behavior of vertices in $Z$ are equivalent, depending on the labeling of the vertices. Regardless, if $uv$ is a gray component, we may assume that the edges in $T[Z,\{u\}]$ are black and the edges in $T[Z,\{v\}]$ are white.

\begin{fact} \label{TriangleBehavior} Let $T$ be a trigraph that is $\indsat{P_4}$. If $\{v_1,v_2,v_3\}$ is a gray triangle in $T$, then $V(T)-\{v_1,v_2,v_3\}$ can be partitioned into $X$ and $Y$ such that the following occur:
\begin{itemize}
   \item The edges $xv_i$ are black for all $i\in\{1,2,3\}$ and for all $x\in X$.
   \item The edges $yv_i$ are white for all $i\in\{1,2,3\}$ and for all $y\in Y$.
\end{itemize}
\end{fact}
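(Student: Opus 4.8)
The plan is to pin down where the gray triangle sits in the gray-component structure of $T$, and then to rule out any vertex outside the triangle that ``sees'' $v_1,v_2,v_3$ in two different colors.

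First I would invoke Fact~\ref{FactNoGrayP4}. Since $v_1v_2$, $v_1v_3$, $v_2v_3$ are all gray, the set $\{v_1,v_2,v_3\}$ lies in some nontrivial gray component $C$ of $T$; by Fact~\ref{FactNoGrayP4}, $C$ is either a complete gray $K_3$ or a gray star $S_k$ with $k\ge 2$. A star is triangle-free (the only edges of $S_k$ run through its center, so no three of its vertices form a triangle), so $C$ must be precisely the complete gray $K_3$ on $\{v_1,v_2,v_3\}$. In particular $T[\{v_1,v_2,v_3\},V(T)-\{v_1,v_2,v_3\}]$ contains no gray edge, so for every $w\in V(T)-\{v_1,v_2,v_3\}$ each of $wv_1$, $wv_2$, $wv_3$ is black or white. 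It then suffices to show that each such $w$ is joined to $v_1,v_2,v_3$ either all in black or all in white; one can then take $X$ to be the set of $w$ with $wv_i\in EB(T)$ for all $i$ and $Y$ the set of $w$ with $wv_i\in EW(T)$ for all $i$, and this partitions $V(T)-\{v_1,v_2,v_3\}$ as required.

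Suppose for contradiction that some $w$ is ``mixed''. After relabeling $v_1,v_2,v_3$ we may assume $wv_1\in EB(T)$ and $wv_2\in EW(T)$, and I would then split on the color of $wv_3$. If $wv_3\in EW(T)$, consider the realization of $T$ obtained by setting $v_1v_3$ and $v_2v_3$ to black and $v_1v_2$ to white (a legitimate realization, since these three are exactly the gray edges inside the triangle and no other gray edge touches $\{v_1,v_2,v_3\}$): in it, $w,v_1,v_3,v_2$ form an induced $P_4$. If instead $wv_3\in EB(T)$, consider the realization obtained by setting $v_2v_3$ to black and $v_1v_2,v_1v_3$ to white: then $v_1,w,v_3,v_2$ form an induced $P_4$. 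Either way, $T$ has a realization containing an induced $P_4$, contradicting that $T$ is $\indsat{P_4}$. Hence no mixed vertex exists, and $X,Y$ are as claimed.

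I do not expect a genuine combinatorial obstacle here; the only point that needs care is verifying that the two graphs exhibited in the last step really are realizations of $T$ — they differ from one another only on the three gray triangle-edges, and every edge incident with $w$ retains its prescribed black or white color — after which the induced-$P_4$ check is just a direct reading-off of edge colors. The argument runs parallel to, but is shorter than, the one needed for the star case in Fact~\ref{StarBehavior}, since a gray triangle has no distinguished ``center'' and hence no extra subcase.
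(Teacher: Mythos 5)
Your proof is correct and follows essentially the same route as the paper's: assume a vertex sees the triangle in mixed colors and exhibit a realization of the gray triangle-edges that turns it into an induced $P_4$. The only difference is cosmetic — you explicitly rule out gray edges leaving the triangle via Fact~\ref{FactNoGrayP4} and treat the ``one black'' and ``two black'' subcases separately, whereas the paper compresses these into a single without-loss-of-generality step.
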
~\\

\subsection{Proof of Theorem~\ref{TheoremP4Smallest}}
\label{sec:mainproof}

We will prove Theorem \ref{TheoremP4Smallest} by strong induction on $n$.  The inductive hypothesis is: If $T$ is a $\indsat{P_4}$ trigraph on $n\geq 2$ vertices, then $|EG(T)|\geq\isatno{n}$.



Note that the statement is true for the trivial base cases of $n=2,3$. So, let $T$ be a trigraph on $n\geq 4$ vertices that is $\indsat{P_4}$.  First, we use Lemma~\ref{LemGrayExists}, proven in Section~\ref{sec:factproofs} to establish that there must be at least one gray edge.
\begin{lem} \label{LemGrayExists}
If $T$ is a $\indsat{P_4}$ trigraph on $n\geq 2$ vertices, then $T$ has a gray edge.
\end{lem}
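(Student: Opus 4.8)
\textbf{Proof proposal for Lemma~\ref{LemGrayExists}.}

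The plan is to argue by contradiction: suppose $T$ is $\indsat{P_4}$ on $n\ge 2$ vertices with $EG(T)=\emptyset$, so that $T$ is an ordinary graph $G$ (black edges as edges, white edges as nonedges). Since $T$ is $\indsat{P_4}$, no realization of $T$ contains an induced $P_4$; but $T$ has only one realization, namely $G$ itself, so $G$ is $P_4$-free in the usual sense. Such graphs are exactly the cographs, which I would want to exploit; but even without invoking cographs, the saturation condition gives strong leverage. For $n\ge 4$, fix any pair of vertices $u,v$. Flipping the edge (or nonedge) $uv$ to gray must create, in some realization, an induced $P_4$. The resulting trigraph $\flip{G}{uv}$ has exactly two realizations: one in which $uv$ is black and one in which it is white. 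By the observation already noted in the paper, the new $P_4$ must actually \emph{use} the flipped pair $uv$ with its color reversed. Hence: if $uv\in E(G)$, there exist vertices $w,x$ such that $x,u,v,w$ (with $uv$ now a nonedge) form an induced $P_4$ in $G-uv$; and symmetrically if $uv\notin E(G)$, adding $uv$ creates an induced $P_4$ through $uv$.

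The key step is to derive a contradiction from the fact that this must hold for \emph{every} pair $uv$ simultaneously. I would first handle small cases $n=2,3$ directly: on $2$ or $3$ vertices a $\indsat{P_4}$ trigraph is a complete gray graph by the remark preceding the statement, so it has a gray edge — but that contradicts $EG(T)=\emptyset$ only if $n\ge 2$ forces at least one edge-slot, which it does, so these cases are immediate. For $n\ge 4$, I would consider a vertex $v$ of maximum degree in $G$ (or, dually via Fact~\ref{SelfComplementary}, minimum degree, whichever is cleaner). Take a non-neighbor $v$ of some appropriate vertex and examine what the forced $P_4$ in $\flip{G}{uv}$ looks like; the endpoints and internal structure of that $P_4$ constrain the adjacencies of $u$ and $v$ to the other two vertices of the path. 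Pushing this for all pairs should force $G$ to contain an actual induced $P_4$ already, contradicting $P_4$-freeness. Concretely: a $P_4$-free graph on $\ge 4$ vertices that is also vertex-wise "flip-saturated" in the above sense cannot exist, because flipping an edge incident to a component of $G$ that is a single vertex, or flipping between two components, tends either to do nothing (keeping $P_4$-freeness) or to already witness a $P_4$ in $G$.

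The main obstacle I expect is turning "flipping $uv$ creates an induced $P_4$ using $uv$" into a statement about $G$ itself that is genuinely contradictory, since the created $P_4$ lives in a \emph{modified} graph. The trick is that the three other slots of that $P_4$ — the pairs among $\{u,v,w,x\}$ other than $uv$ — are not gray in $\flip{G}{uv}$ (only $uv$ was flipped), so their colors in the witnessing realization equal their colors in $G$. Thus $x,u,v,w$ minus the pair $uv$, together with whatever color $uv$ had in $G$, is an induced subgraph of $G$ on $4$ vertices that is "one flip away from $P_4$": it is a $P_4$ with one edge/nonedge toggled, i.e., it is a $C_4$, a $K_3$ plus a pendant-nonedge ($= P_3 + K_1$ arrangement), a paw, or a path/claw variant. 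Enumerating these four-vertex graphs that become $P_4$ under a single edge toggle, and showing that the presence of such a configuration \emph{for every pair $uv$} of $G$ forces a genuine induced $P_4$ somewhere (or forces $n<4$), is the crux. I would likely organize this as a short case analysis on the structure of the two maximal "modules" or the two sides of a disconnecting cut of $G$ — both of which exist since $G$ is a cograph — and on each side the flip-saturation condition collapses, giving the contradiction. Once the contradiction is reached for all $n\ge 2$, the lemma follows.
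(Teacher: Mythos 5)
Your setup is sound --- with $EG(T)=\emptyset$ the trigraph is an ordinary $P_4$-free graph $G$, the flipped pair must appear in the new $P_4$ with its color reversed, and the other five pairs of that $P_4$ retain their colors from $G$ --- but the proof stops exactly where it needs to start. The entire deduction of a contradiction is deferred to phrases like ``pushing this for all pairs should force'' and ``is the crux,'' and the case analysis on the four-vertex configurations one toggle away from $P_4$ (which are $C_4$, the paw, $2K_2$, and $P_3\cup K_1$ --- note that ``$K_3$ plus a pendant'' and ``$P_3+K_1$'' are two different graphs, not one) is never carried out. More importantly, the mechanism that actually closes the argument is missing: you have no inductive framework. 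The statement is proved in the paper by strong induction on $n$, and the induction is essential. The clean way to exploit your own observation that $G$ is a cograph is: by Fact~\ref{SelfComplementary} assume $G$ is disconnected (one of $G,\overline{G}$ is, for a cograph on $n\ge 2$ vertices); by Fact~\ref{CompleteDisjoint} each side of the disconnection is itself $\indsat{P_4}$; the \emph{inductive hypothesis} then supplies a gray edge in any side with at least two vertices, contradicting $EG(T)=\emptyset$; and if every component is a singleton then $G$ is the all-white graph, which is not $\indsat{P_4}$ for $n\ge 4$ since $\flip{G}{uv}$ has realizations with at most one edge. Without induction, knowing that $G$ splits into modules gives you nothing contradictory: the forced $P_4$ through a flipped cross-pair simply deposits an edge or an induced $P_3$ inside one of the parts, which is perfectly consistent with $G$ being a cograph, so the pair-by-pair analysis you sketch does not terminate.

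For comparison, the paper's proof also proceeds by strong induction but avoids quoting the cograph decomposition theorem: it locates an induced $P_3$, say $uxv$ (if none exists, $G$ is a disjoint union of cliques and Fact~\ref{CompleteDisjoint} plus induction finishes), partitions the remaining vertices by their adjacency to $u$ and $v$, shows the black neighborhoods nest, and extracts either an induced $P_4$ directly or a monochromatic cut $T[X_{\ell},V-X_{\ell}]$ to which Fact~\ref{CompleteDisjoint} and the inductive hypothesis apply. Your route can be made to work and would in fact be shorter, but only once you install the induction and the reduction via Fact~\ref{CompleteDisjoint}; as written, the proposal identifies the right objects and then leaves the contradiction unproved.
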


By Fact~\ref{FactNoGrayP4}, we know that gray edges must be in components that are either triangles or stars and by Facts~\ref{StarBehavior} and~\ref{TriangleBehavior}, each nontrivial gray component $C$ partitions $V(T)$ into sets $C$, $X$, $Y$ and $Z$, any of which could be empty.

\begin{claim}\label{cl:XYZ}
   With $X,Y,Z$ defined as above, the edges in $T[Z,X]$ are black and the edges in $T[Z,Y]$ are white.
\end{claim}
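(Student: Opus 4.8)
The plan is to fix a nontrivial gray component $C$ (either a gray star with center $u$ and leaves $v_1,\dots,v_{k-1}$, or a gray triangle $\{v_1,v_2,v_3\}$), take arbitrary vertices $z\in Z$ and $x\in X$, and derive a contradiction from the assumption that $zx\in EW(T)$; the argument for $T[Z,Y]$ is then immediate by applying Fact~\ref{SelfComplementary} to pass to $\comp{T}$ (which swaps black and white, swaps the roles of $X$ and $Y$, and fixes $Z$, since all of $C$, $X$, $Y$, $Z$ are defined by colors that are interchanged under complementation). So it suffices to handle one case, say $T[Z,X]$, and show every such edge is black.

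First I would recall what the partition gives us: by Fact~\ref{StarBehavior} (in the star case) every $x\in X$ is black to $u$ and to each $v_i$, every $z\in Z$ is (in one of the two subcases) white to each $v_i$ and black to $u$, or black to each $v_i$ and white to $u$; by Fact~\ref{TriangleBehavior} (in the triangle case) every $x\in X$ is black to all three $v_i$ and every $z\in Z$ is white to all three $v_i$. The key move is then to locate, inside $C\cup\{x,z\}$, a set of four vertices that forms a $P_4$ in the trigraph $\flip{T}{zx}$ — i.e.\ using the newly grayed edge $zx$ as a black edge and all the fixed colors as stated — which already exists as a realization without flipping, contradicting that $T$ has no realized induced $P_4$. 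Concretely, in the triangle case, take $\{z, x, v_1, v_2\}$: the edge $zx$ is white, $x$ is black to both $v_1$ and $v_2$, $z$ is white to both, and $v_1v_2$ is gray; choosing $v_1v_2$ white and $xv_1$ black gives the path $z\,x\,v_1\,v_2$? — I need to check adjacencies carefully: we want exactly the path edges present. Better: use $\{z,x,v_1,v_2\}$ with $zx$ white, $xv_1$ black, $v_1v_2$ realized white, and check $zv_1,zv_2,xv_2$ — here $zv_1,zv_2$ are white and $xv_2$ is black, so the edge set is $\{xv_1,xv_2\}$, a path $v_1\,x\,v_2$ plus isolated $z$, not a $P_4$. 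Instead use the third triangle vertex: among $v_1,v_2,v_3$ realize $v_1v_2$ black and $v_2v_3,v_1v_3$ white, giving within $\{v_1,v_2,v_3,x\}$ the path $v_1 v_2 v_3$ extended by… $xv_i$ all black — still not right. The clean statement is that $x$ together with any realization of the gray triangle having exactly one edge gives $K_1 + P_3$ which is not $P_4$, so the contradiction must instead come from combining $z$ and $x$ with the structure; I would pick the realization of $C$ together with $x,z$ so that, say in the star case with $z$ white to the $v_i$ and black to $u$, the four vertices $v_1\, u\, z\, x$ with $v_1u$ realized gray-to-black, $uz$ black, $zx$ white-flipped-to-black form a path once we verify $v_1 z, v_1 x, u x$ are white, white, black — again $ux$ black breaks it. The honest resolution: we want $zx$ white so that flipping it to gray and realizing it black \emph{creates} the $P_4$, so the correct four-set has $z$ and $x$ as the two degree-one ends or as an adjacent pair where $zx$ is exactly one path-edge; the right choice is $x\, v_i\, u\, z$? with $xv_i$ black (path edge), $v_i u$ gray realized black (path edge), $u z$ — must be white for this to be a path, which is the subcase where $z$ is white to $u$ (hence black to the $v_i$); then check $xu$ (black — breaks it again) — so actually the working configuration is $\{x, u, z, v_i\}$ is never a $P_4$ because $x$ sees all of $C$ black.

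Hence the real obstruction, and the step I expect to be the crux, is that the contradiction cannot come from $C\cup\{x,z\}$ alone — one must instead flip a \emph{different} non-gray edge and realize the flipped trigraph. The right approach is: assume $zx\in EW(T)$ and consider $\flip{T}{zx}$; since $T$ is $\indsat{P_4}$, $\flip{T}{zx}$ has a realized induced $P_4$, and by the observation that flipping a white edge forces it to be used black, some realization $G$ of $\flip{T}{zx}$ has an induced $P_4$ through the edge $zx$. Now I analyze this $P_4$, call it $p_1 p_2 p_3 p_4$ with $\{z,x\}$ among consecutive vertices, and use the known colors from Facts~\ref{StarBehavior}/\ref{TriangleBehavior} to show the remaining one or two vertices of the $P_4$, wherever they lie in the partition, can be replaced by (or already are) vertices of $C$ to produce an induced $P_4$ realizable in $T$ itself — exploiting that $x$ is uniformly black to all of $C$ and $z$ is uniformly one color to all of $C$, so $C$'s vertices behave like clones with respect to $\{x,z\}$. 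This "transfer into $C$" argument, combined with Fact~\ref{CompleteDisjoint} applied to the white partition $\{z\}\cup(\text{stuff})$ versus the rest when the $P_4$ would have to avoid $C$ entirely, is where all the case-checking lives; the two ends of the induction (what if $X$ or the $P_4$-remainder is empty) are routine. I would organize it as: (i) reduce to the $T[Z,X]$ statement via $\comp{T}$; (ii) suppose $zx$ white, get a realized induced $P_4$ in $\flip{T}{zx}$ using $zx$ black; (iii) show the $P_4$'s vertices outside $\{z,x\}$ can be taken in $C$, using the uniform-neighborhood property of $z$ and $x$ toward $C$; (iv) conclude this $P_4$ is realizable in $T$, a contradiction.
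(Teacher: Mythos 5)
There is a genuine gap, and it sits exactly where you located the ``crux.'' You searched for a contradiction inside $C\cup\{x,z\}$, failed to find one, and concluded that ``the contradiction cannot come from $C\cup\{x,z\}$ alone,'' pivoting to a plan that flips $zx$ and then tries to ``transfer'' the resulting $P_4$ back into $T$. In fact the contradiction \emph{does} come from $C\cup\{x,z\}$, with no flipping at all: the freedom you need is the gray edge of the star itself, realized as a \emph{non}-edge of the path. Two preliminary observations you partly garbled: if $C$ is a gray triangle, Fact~\ref{TriangleBehavior} gives no $Z$ at all, so the claim is vacuous there; and in the star case one may choose a center--leaf gray edge $uv$ of $C$ so that (by Fact~\ref{StarBehavior}) every $z\in Z$ is black to $u$ and white to $v$, while every $x\in X$ is black to both and every $y\in Y$ is white to both. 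Now if $zx$ is white, realize $uv$ as white: the four vertices $z,u,x,v$ have exactly the edges $zu$, $ux$, $xv$ black and $zx$, $zv$, $uv$ white, i.e.\ $zuxv$ is a realization of $P_4$ in $T$ itself --- contradiction. All of your candidate four-sets either used $zx$ as a path edge (which requires flipping and hence yields no contradiction) or realized $uv$ as black; the working configuration places $u$ and $v$ at positions $2$ and $4$ so that the gray $uv$ is a chord realized white. Dually, if $zy$ is black for some $y\in Y$, then $yzuv$ with $uv$ realized black is a $P_4$ in $T$, which disposes of $T[Z,Y]$ directly (your complementation reduction would also work, but is unnecessary).

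Your fallback plan does not close the gap. Since $T$ is $\indsat{P_4}$, the trigraph $\flip{T}{zx}$ is \emph{guaranteed} to have a realization of $P_4$ through $zx$; this is not by itself contradictory, so everything rests on step (iii), the claim that the other vertices of that $P_4$ can be pushed into $C$ to yield a $P_4$ realizable in $T$. But any such transferred $P_4$ would still need $zx$ as a black edge unless the rearrangement removes $z$ or $x$ or changes the path structure, and you give no mechanism for that; indeed your own case analysis shows that four-sets containing both $z$ and $x$ together with vertices of $C$ and using $zx$ as a path edge never form a $P_4$. So step (iii) is not merely unproved --- as stated it cannot succeed, and the argument has to be replaced by the direct realization above.
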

\begin{proofcite}{Claim~\ref{cl:XYZ}} If $C$ is a gray triangle or $Z$ is otherwise empty, the claim is vacuous, so we assume that $C$ is a star and $Z$ is not empty. Without loss of generality, let us assume $uv$ is a gray edge in the star such that the edges in $T[Z,\{u\}]$ are black and the edges in $T[Z,\{v\}]$ are white. Let $z\in Z$.  If $x\in X$ and $zx$ is white, then $zuxv$ has a realization of $P_4$. If $y\in Y$ and $zy$ is black, then $yzuv$ has a realization of $P_4$. This proves Claim~\ref{cl:XYZ}.
\end{proofcite}

Let $C_0$ be a gray component in which $Z$ is maximum-sized.
\begin{claim}\label{cl:nograyedge}
   With $X,Y,Z$ defined by $C_0$, there are no gray edges in $T[X,Y]$.
\end{claim}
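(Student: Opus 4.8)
The plan is to argue by contradiction: suppose $C_0$ is a gray component maximizing $|Z|$, and suppose there is a gray edge $ab \in EG(T[X,Y])$ with $a \in X$, $b \in Y$ (where $X,Y,Z$ are the sets associated to $C_0$ by Facts~\ref{StarBehavior} or~\ref{TriangleBehavior}). Since $ab$ is gray, it lies in some nontrivial gray component $C_1$, and $C_1$ is disjoint from $C_0$. I would then apply Fact~\ref{StarBehavior} or Fact~\ref{TriangleBehavior} to $C_1$, obtaining a partition of $V(T) - C_1$ into sets $X_1, Y_1, Z_1$. The key is to locate the vertices of $C_0$ among $X_1, Y_1, Z_1$ and thereby produce a set $Z_1$ strictly larger than $Z$, contradicting the maximality of $|Z|$.

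First I would pin down, using Claim~\ref{cl:XYZ} and the case analysis from Facts~\ref{StarBehavior}/\ref{TriangleBehavior}, how $a$ and $b$ sit relative to $C_0$. Because $a\in X$, every edge from $a$ to $C_0$ is black; because $b\in Y$, every edge from $b$ to $C_0$ is white. So $C_0$ is a gray component all of whose vertices see $a$ entirely in black and $b$ entirely in white. Now consider the partition $X_1,Y_1,Z_1$ of $V(T)-C_1$ induced by $C_1$ (which contains the gray edge $ab$). If $C_1$ is a gray star, write it with a gray edge $u_1 v_1$ so that $T[Z_1,\{u_1\}]$ is black and $T[Z_1,\{v_1\}]$ is white (using the normalization in the remark after Fact~\ref{StarBehavior}); if $C_1$ is a triangle there is no $Z_1$. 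The main step is to show that the entire gray component $C_0$ lands inside $Z_1$: a vertex $w \in C_0$ must go into $X_1$ (all-black to $C_1$), $Y_1$ (all-white to $C_1$), or $Z_1$. I would rule out the first two options. Since $C_0$ is gray-connected, it suffices to handle one vertex $w \in C_0$ and then propagate along gray edges of $C_0$ — any two gray-adjacent vertices of $C_0$ must land in the same part of $\{X_1,Y_1,Z_1\}$, because a gray edge inside a would-be all-black or all-white set to $C_1$ combined with the structure of $C_1$ creates a realization of $P_4$ (this is exactly the kind of forbidden configuration that forces Facts~\ref{StarBehavior}/\ref{TriangleBehavior}). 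So all of $C_0$ is in a single part; if that part were $X_1$ then every vertex of $C_0$ is black to all of $C_1 \ni b$, contradicting that $b$ is white to $C_0$; symmetrically the part cannot be $Y_1$. Hence $C_0 \subseteq Z_1$.

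Finally, I would compare sizes. We have $Z \subseteq V(T) - C_0$ and $Z_1 \supseteq C_0$, with $|C_0| \geq 2$. To get the contradiction I need $|Z_1| > |Z|$, so I would show $Z \subseteq Z_1$ as well, or at least that $Z_1$ contains $Z$ together with the (at least two) new vertices of $C_0$. For $z \in Z$: $z$ sees $C_0$ partly black (to the $u$-side) and partly white (to the $v$-side), so in particular $z \notin X$ and $z\notin Y$ relative to no — more to the point, I would check directly that $z$ cannot be placed in $X_1$ or $Y_1$ by the same $P_4$-avoidance argument (a $z$ that is all-black or all-white to the gray component $C_1$, together with $C_0$'s gray edges and the black/white split of $z$ across $C_0$, yields an induced $P_4$). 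Thus $z \in Z_1$, so $Z \cup C_0 \subseteq Z_1$, whence $|Z_1| \geq |Z| + |C_0| \geq |Z| + 2 > |Z|$, contradicting the choice of $C_0$.

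The main obstacle I anticipate is the bookkeeping in the propagation step — verifying that gray-adjacency inside $C_0$ forces vertices into the same part of the $C_1$-partition, and that no vertex of $Z$ or $C_0$ can slip into $X_1$ or $Y_1$ — which requires carefully exhibiting the induced $P_4$ in each sub-case (star vs. triangle for each of $C_0$ and $C_1$, and the two orientations in the star case). These are all short $4$-vertex arguments of the same flavor as Claim~\ref{cl:XYZ}, but there are several of them and the symmetry between $X \leftrightarrow Y$ and $u \leftrightarrow v$ needs to be invoked cleanly to keep the case count down.
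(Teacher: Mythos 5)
Your proposal is correct and follows essentially the same route as the paper: the gray edge $ab$ between $X$ and $Y$ forces $Z\cup C_0$ (the paper uses just $Z\cup\{u,v\}$) into the $Z$-set of the component containing $ab$, contradicting the maximality of $|Z|$. The ``propagation along gray edges'' step is unnecessary, since each vertex of $C_0$ and of $Z$ is individually black to $a$ and white to $b$ (by the definitions of $X,Y$ and by Claim~\ref{cl:XYZ}), which already places it in $Z_1$ directly.
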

\begin{proofcite}{Claim~\ref{cl:nograyedge}} Let $xy$ be a gray edge such that $x\in X$ and $y\in Y$.  By Claim~\ref{cl:XYZ}, the set $S=Z\cup\{u,v\}$ has the property that the edges in $T[\{x\},Z]$ are black and the edges in $T[\{y\},Z]$ are white.  Hence, the gray component that $xy$ is in has a larger $Z$-set than that formed by $C_0$.
\end{proofcite}

With $C_0$, $X$, $Y$ and $Z$ defined as above, we can turn to Lemma~\ref{TechnicalLemma}:
\begin{lem}\label{TechnicalLemma}
Let $T$ be a $\indsat{P_4}$ trigraph and let $X,Y$ be disjoint subsets of $V=V(T)$, $X\cup Y\neq\emptyset$ such that the following is true:
\begin{enumerate}
   \item $T[X,Y]$ has no gray edge.\label{tech:nograyedge}
   \item Each edge in $T[X,V-(X\cup Y)]$ is black and each edge in $T[Y,V-(X\cup Y)]$ is white.\label{tech:blackwhite}
   \item There exist vertices $u,v\in V(T)-(X\cup Y)$ such that $uv$ is a gray edge.\label{tech:grayedge}
\end{enumerate}

If $|X|+|Y|\geq 2$, then the number of gray edges in $T[X\cup Y]$ is at least $\left\lceil\frac{|X|+|Y|}{3}\right\rceil$.
\end{lem}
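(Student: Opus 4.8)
\textbf{Proof proposal for Lemma~\ref{TechnicalLemma}.}

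The plan is to argue that the subtrigraph $T[X\cup Y]$ is itself essentially $\indsat{P_4}$, up to the attachment of the external gray edge $uv$, and then apply the inductive hypothesis of Theorem~\ref{TheoremP4Smallest}. First I would form the auxiliary trigraph $T'$ on vertex set $(X\cup Y)\cup\{u,v\}$ induced from $T$; by hypothesis~\eqref{tech:blackwhite} every edge from $X$ to $\{u,v\}$ is black and every edge from $Y$ to $\{u,v\}$ is white (since $u,v\notin X\cup Y$), and $uv$ is gray by~\eqref{tech:grayedge}. The key claim is that $T'$ is $\indsat{P_4}$: it has no realization containing an induced $P_4$ because $T$ has none and $T'$ is induced in $T$ (any realization of $T'$ extends to a realization of $T$ with the same induced subgraphs on $V(T')$, using Fact~\ref{CompleteDisjoint}-type reasoning on the black/white cut between $V(T')$ and the rest); and flipping any black or white edge $e$ inside $T[X\cup Y]$ to gray creates a $P_4$-realization in $T\fivedots e$, which must in fact use only vertices of $V(T')$ — this is where I would lean on the structure, arguing via Claim~\ref{cl:XYZ}-style exchange arguments that the witnessing $P_4$ can be taken inside $X\cup Y\cup\{u,v\}$, perhaps after routing through $u$ or $v$.

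Once $T'$ is known to be $\indsat{P_4}$ on $|X|+|Y|+2$ vertices, the inductive hypothesis gives $|EG(T')|\geq\isatno{|X|+|Y|+2} = \left\lceil\frac{|X|+|Y|+3}{3}\right\rceil = \left\lceil\frac{|X|+|Y|}{3}\right\rceil+1$. Since $uv$ is the only gray edge in $T'$ incident to $\{u,v\}$ (the edges from $\{u,v\}$ into $X\cup Y$ are all black or white by~\eqref{tech:blackwhite}), we get $|EG(T[X\cup Y])| = |EG(T')|-1 \geq \left\lceil\frac{|X|+|Y|}{3}\right\rceil$, which is exactly the desired bound. I would need the side condition $|X|+|Y|\geq 2$ precisely to keep $|V(T')|\geq 4$ so that the nontrivial form of the inductive hypothesis applies rather than the all-gray base case.

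The main obstacle is establishing that $T'$ really is $\indsat{P_4}$ — specifically the "flipping" direction. After flipping an edge $e\subseteq X\cup Y$ to gray, $T\fivedots e$ has a $P_4$-realization somewhere in $V(T)$; I must show it can be localized to $V(T') = X\cup Y\cup\{u,v\}$. This requires showing that any vertex $w\notin V(T')$ appearing in the witnessing $P_4$ can be swapped out: since $w$ sends only black edges to $X$ and only white edges to $Y$ in $T$ (condition~\eqref{tech:blackwhite} is about edges between $X$ (resp. $Y$) and $V-(X\cup Y)$, which includes $w$), the vertex $w$ behaves, relative to $X\cup Y$, either like the $u$-side or the $v$-side of the gray component; using Claim~\ref{cl:XYZ} and the fact that $uv$ is gray I would replace $w$ by $u$ or $v$ while preserving the induced $P_4$. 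A secondary subtlety is handling the edge $e$ when exactly one endpoint lies in $X$ and the other in $Y$ (a cut edge of the $X$–$Y$ partition), versus when both endpoints lie in $X$ or both in $Y$; in each case the created $P_4$ must be shown to avoid needing outside vertices, and I expect the argument to split along these lines, with the cross case being the delicate one since flipping a cross edge to gray could a priori interact with the external gray edge $uv$ in a way that only produces a $P_4$ using a third outside vertex — ruling that out is the crux.
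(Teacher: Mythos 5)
Your plan hinges on the claim that $T'=T[X\cup Y\cup\{u,v\}]$ is $\indsat{P_4}$, and there is a genuine gap there: being $\indsat{P_4}$ requires that flipping \emph{every} black or white edge of $T'$ produce a realization of $P_4$ inside $V(T')$, and $T'$ contains not only the edges of $T[X\cup Y]$ but also the black edges from $\{u,v\}$ to $X$ and the white edges from $\{u,v\}$ to $Y$. You only address edges with both endpoints in $X\cup Y$ (for those, the localization does work: a witnessing $P_4$ must contain both endpoints of the flipped edge, a short case analysis shows it contains at most one vertex outside $X\cup Y$, and such a vertex has the same neighborhood in $X\cup Y$ as $u$ and $v$ do, so it can be swapped for one of them). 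For an edge such as $ux$ with $x\in X$, however, the witnessing $P_4$ in $T\fivedots ux$ may essentially use \emph{other} vertices of $V-(X\cup Y)$ --- for instance another leaf $v_2$ of the gray component, giving a path like $v_2\,x\,v_1\,u$ --- and the swap argument breaks down because the replacement vertex must have prescribed adjacencies to $u$ itself, which are not controlled by hypotheses (1)--(3); vertices of $X$ are forced black to $u$ and vertices of $Y$ need not be adjacent to $x$. So there is no reason $T'$ is $\indsat{P_4}$, and without that you cannot invoke the inductive hypothesis on $T'$. (A secondary, smaller issue: even the part you do address is only sketched, deferred to ``Claim~\ref{cl:XYZ}-style exchange arguments.'')

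For comparison, the paper avoids this entirely: it never tries to make $X\cup Y$ (or a small extension of it) into a single $\indsat{P_4}$ trigraph. Instead it partitions $X$ into equivalence classes $X_1,\dots,X_\ell$ by black neighborhoods in $Y$, shows (via the analogues of Claims~\ref{cl:betweenxi} and~\ref{cl:equiv}) that these neighborhoods nest, obtains a corresponding partition $Y_1,\dots,Y_{\ell+1}$ of $Y$, applies Fact~\ref{FactSameBehavior} to conclude each \emph{class} is $\indsat{P_4}$, and then does a counting argument over pairs $(X_i,Y_i)$ resp.\ $(X_i,Y_{i+1})$, with Claim~\ref{cl:smallsets} controlling how many classes can be singletons and two exceptional configurations handled by deleting an extreme class and inducting. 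If you want to salvage your approach, you would need either to prove the flippability of the $u$-- and $v$--incident edges within $V(T')$ (which appears false in general) or to retreat to a statement about $T[X\cup Y]$ alone, at which point some decomposition of $X\cup Y$ like the paper's seems unavoidable.
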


To see that we can apply Lemma~\ref{TechnicalLemma} to finish the proof, note the following: Condition (\ref{tech:nograyedge}) holds by Claim~\ref{cl:nograyedge}. Condition (\ref{tech:blackwhite}) holds because of the definition of $X$ and $Y$ from Facts~\ref{StarBehavior} and~\ref{TriangleBehavior} as well as because of Claim~\ref{cl:nograyedge}. Condition (\ref{tech:grayedge}) holds because of the existence of $C_0$.~\\

Let $uv$ be a gray edge in $C_0$ such that the edges in $T[Z,\{u\}]$ are black and the edges in $T[Z,\{v\}]$ are white.

\noi\textbf{Case (a.) $|Z|=1$}:\\
We will show that this case cannot occur.  Suppose it does and let $Z=\{z\}$. Consider a realization of $P_4$, call it $P$, in the trigraph $\flip{T}{zv}$. By the definition of $P$, it must use $zv$ as a black edge.  Then $P$ cannot contain an $x\in X$ because if it did, both $xz$ and $xv_2$ would be black, creating a black triangle and preventing $P$ from being a realization of $P_4$. But, $P$ cannot contain a member of $Y$ either, because there is no black/gray path from a vertex in $Y$ to either $z$ or $v$ that avoids $X$.

Since $P$ cannot contain a vertex in $X\cup Y$, it must be contained in the vertices $\{z,u,v\}$, but there are only three of those, a contradiction. So, Case (a.) cannot occur.\\

\noi\textbf{Case (b.) $Z=\emptyset$, $|X|+|Y|\leq 1$}:\\
In this case, $|V(T)|\leq|C_0|+1$.  If $T$ has at least $4$ vertices, then $C_0$ has at least $|C_0|-1=n-2$ gray edges, because $C_0$ is a gray component.  Since $n-2\geq\lceil (n+1)/3\rceil$ for all $n\geq 4$, Case (b.) satisfies the conditions of the theorem.\\

\noi\textbf{Case (c.) $Z=\emptyset$, $|X|+|Y|\geq 2$}:\\
We can apply Lemma~\ref{TechnicalLemma}. The number of gray edges in $T$ is at least
$$ \left\lceil\frac{|X|+|Y|}{3}\right\rceil+|C_0|-1 =\left\lceil\frac{n+2|C_0|-3}{3}\right\rceil \geq\left\lceil\frac{n+1}{3}\right\rceil . $$
Thus, Case (c.) satisfies the conditions of the theorem.~\\

\noi\textbf{Case (d.) $|Z|\geq 2$, $|X|+|Y|\leq 1$}:\\
By Fact~\ref{FactSameBehavior} (where $S=Z$) the subtrigraph $T[Z]$ is $\indsat{P_4}$. By the inductive hypothesis, the number of gray edges in $T[Z]$ is at least $\left\lceil\frac{|Z|+1}{3}\right\rceil$.
Hence, the number of gray edges in $T$ is at least
$$ \left\lceil\frac{|Z|+1}{3}\right\rceil+|C_0|-1 =\left\lceil\frac{|Z|+3|C_0|-2}{3}\right\rceil \geq\left\lceil\frac{(n-|C_0|-1)+3|C_0|-2}{3}\right\rceil =\left\lceil\frac{n+2|C_0|-3}{3}\right\rceil \geq\left\lceil\frac{n+1}{3}\right\rceil . $$
Thus, Case (d.) satisfies the conditions of the theorem.~\\

\noi\textbf{Case (e.) $|Z|\geq 2$, $|X|+|Y|\geq 2$}:\\
Again, by Fact~\ref{FactSameBehavior} the subtrigraph $T[Z]$ is $\indsat{P_4}$. We may apply both the inductive hypothesis and Lemma~\ref{TechnicalLemma}. The number of gray edges in $T$ is at least
$$ \left\lceil\frac{|Z|+1}{3}\right\rceil+\left\lceil\frac{|X|+|Y|}{3}\right\rceil+|C_0|-1 =\left\lceil\frac{|X|+|Y|+|Z|+1+3|C_0|-3}{3}\right\rceil =\left\lceil\frac{n+2|C_0|-2}{3}\right\rceil \geq\left\lceil\frac{n+2}{3}\right\rceil. $$
Thus, Case (e.) satisfies the conditions of the theorem. So, the proof of Theorem~\ref{TheoremP4Smallest} is complete.\hfill~$\Box$~\\


\subsubsection{Proof of Lemma~\ref{LemGrayExists}}
\label{sec:proofgrayexists}
We will prove that every $\indsat{P_4}$ trigraph on $n\geq 2$ vertices has at least one gray edge by strong induction on $n$. The base cases of $n=2,3$ are true by definition.

Let $n\geq 4$. For the sake of contradiction, assume $T$ is a $\indsat{P_4}$ trigraph on vertex set $V$, $|V|\geq 4$, which does not contain a gray edge. Since $T$ is assumed to have no gray edges, we may regard it as a graph and will sometimes use the language of edges and nonedges.

There must be an induced $P_3$ in $T$, otherwise $T$ is disjoint cliques.  If $T$ were disjoint cliques, then Fact~\ref{CompleteDisjoint} implies that each of the cliques is $\indsat{P_4}$. We may apply the inductive hypothesis unless each clique is of size 1 or there is one clique of size $n$. It is trivial to see that neither case is possible because a trigraph on at least 4 vertices with all white edges or with all black edges cannot be $\indsat{P_4}$.

Let the induced $P_3$ in $T$ be $uxv$. Let the set $X$ be the vertices that are adjacent to both $u$ and $v$. The set $Z_1$ will be the vertices that are adjacent to $u$ but not $v$. Likewise, the set $Z_2$ will be the vertices that are adjacent to $v$ but not $u$. Finally, let the set $Y$ be all vertices that are not adjacent to $u$ or $v$. See Figure~\ref{OneGraySetUp}.

\begin{figure}[ht]
     \centering
     \includegraphics[height=\figheight]{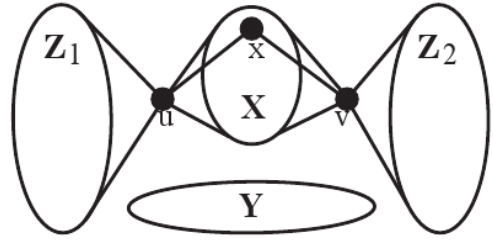}
     \caption{General set-up for Proof of Lemma \ref{LemGrayExists}. Trigraph $T$ containing a $P_3$ with vertices $uxv$.}\label{OneGraySetUp}
\end{figure}

First, note that $x\in X$, hence $X\not=\emptyset$.


Second, we show that all edges in $T[X,Z_1]$ and in $T[X,Z_2]$ must be black. To see this, let $x\in X, z_1\in Z_1,z_2 \in Z_2$. Without loss of generality, assume $xz_1\not\in EB(T)$. In this case, $T$ contains an induced $P_4$, namely $z_1uxv$, contradicting that $T$ is $\indsat{P_4}$. By a symmetric argument, we may assume $xz_2\in EB(T)$.

We will partition the vertices of $X$ into equivalence classes according to their neighborhoods in $Y$.  For each $x\in X$, denote $N_Y^B(x)$ to be the set of $y\in Y$ such that the edge $xy$ is black. In order to proceed, we need two claims. Claim~\ref{cl:betweenxi} establishes that the equivalence classes have the same color edge between them.
\begin{claim}\label{cl:betweenxi}
If $x,x'\in X$ have different neighborhoods in $Y$, then $xx'$ is black.

If $y,y'\in Y$ have different neighborhoods in $X$, then $yy'$ is white.
\end{claim}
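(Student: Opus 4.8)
The plan is to prove both halves by exhibiting a forbidden induced $P_4$, using the vertex $u$ as an anchor and exploiting what we already know: $u$ is adjacent to every vertex of $X$ and non-adjacent to every vertex of $Y$ (since $x\in X$ forces $ux\in EB(T)$, and $y\in Y$ means $uy\in EW(T)$).

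First I would handle the statement about $X$. Suppose $x,x'\in X$ have $N_Y^B(x)\neq N_Y^B(x')$; after relabelling $x\leftrightarrow x'$ if necessary, fix a vertex $y$ in the symmetric difference with $xy\in EB(T)$ and $x'y\in EW(T)$. The claim is $xx'\in EB(T)$. Assume not, so $xx'\in EW(T)$. Then I would check that the four (pairwise distinct) vertices $x',u,x,y$ induce a $P_4$: the pairs $x'u$ and $ux$ are black because $x,x'\in X$, and $xy$ is black by the choice of $y$; while $x'x$ is white by assumption, $uy$ is white because $y\in Y$, and $x'y$ is white by the choice of $y$. Since $T$ has no gray edges it is its own (only) realization, so this contradicts $T$ being $\indsat{P_4}$, and hence $xx'\in EB(T)$.

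For the statement about $Y$, one can either run the symmetric argument or invoke the complement. Directly: given $y,y'\in Y$ with $N_X^B(y)\neq N_X^B(y')$, relabel so that some $x\in X$ has $xy\in EB(T)$ and $xy'\in EW(T)$; assuming $yy'\in EB(T)$, the vertices $u,x,y,y'$ induce a $P_4$ (black pairs $ux,xy,yy'$; white pairs $uy,uy',xy'$), a contradiction, so $yy'\in EW(T)$. Alternatively, this second statement is precisely the first statement applied to $\comp{T}$ — which is $\indsat{P_4}$ by Fact~\ref{SelfComplementary} — since, relative to the same anchor $u$, the role of ``$X$'' in $\comp{T}$ is played by $Y$, and ``different neighborhood'' is complement-invariant.

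There is no real obstacle here; the only care needed is the bookkeeping. One must confirm the four chosen vertices are pairwise distinct (immediate: $u,v$ lie outside $X\cup Y\cup Z_1\cup Z_2$, the sets $X$ and $Y$ are disjoint, and $x\neq x'$, resp. $y\neq y'$, because their neighborhoods differ) and that all six pairs receive the asserted colors. The single substantive point is the observation that $u$ simultaneously ``hits all of $X$'' and ``misses all of $Y$,'' which is exactly what allows a lone vertex distinguishing two members of a part to be promoted to an induced $P_4$.
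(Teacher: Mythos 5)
Your proof is correct and is essentially the paper's own argument: in both halves you pick a vertex of the symmetric difference of the two neighborhoods and exhibit the induced paths $x'uxy$ (the paper's $\tilde{y}xux'$ reversed) and $uxyy'$, anchored on the fact that $u$ is black to all of $X$ and white to all of $Y$. The only cosmetic difference is that the paper phrases the hypothesis as ``$xx'$ is not black'' (so the same realization argument also covers a gray $xx'$, which matters when the claim is reused in Lemma~\ref{TechnicalLemma}), whereas you assume $xx'$ white outright, which is fine in the gray-free setting of Lemma~\ref{LemGrayExists}.
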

\begin{proofcite}{Claim~\ref{cl:betweenxi}}
Suppose $xx'$ is not black. If $\tilde{y}\in Y$ where $x\tilde{y}$ is black and $x'\tilde{y}$ is white, then $\tilde{y}xux'$ is a realization of $P_4$.

Suppose $yy'$ is not white. If $\tilde{x}\in X$ where $y\tilde{x}$ is black and $y'\tilde{x}$ is white, then $u\tilde{x}yy'$ is a realization of $P_4$. This proves Claim~\ref{cl:betweenxi}.
\end{proofcite}~\\

Claim~\ref{cl:equiv} establishes that the black neighborhoods in $Y$ nest.
\begin{claim}\label{cl:equiv}
For all $x,x'\in X$, either $N_Y^B(x)\subseteq N_Y^B(x')$ or $N_Y^B(x)\supseteq N_Y^B(x')$.

For all $y,y'\in Y$, either $N_X^B(y)\subseteq N_X^B(y')$ or $N_X^B(y)\supseteq N_X^B(y')$.
\end{claim}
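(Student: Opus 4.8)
The plan is to prove the first statement; the second follows by the complement/symmetry argument (Fact~\ref{SelfComplementary} swaps the roles of $X$ and $Y$ and of black and white, so it suffices to do one). Suppose for contradiction that there exist $x, x' \in X$ with $N_Y^B(x) \not\subseteq N_Y^B(x')$ and $N_Y^B(x') \not\subseteq N_Y^B(x)$. Then there are vertices $y, y' \in Y$ with $xy$ black, $x'y$ white, $xy'$ white, $x'y'$ black. Since $x$ and $x'$ have different neighborhoods in $Y$ (witnessed by $y$, say), Claim~\ref{cl:betweenxi} gives that $xx'$ is black. Likewise, $y$ and $y'$ have different neighborhoods in $X$ (witnessed by $x$), so Claim~\ref{cl:betweenxi} gives that $yy'$ is white.

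Now I would examine the four vertices $\{x, x', y, y'\}$. We know: $xx' \in EB(T)$, $yy' \in EW(T)$, $xy \in EB(T)$, $x'y' \in EB(T)$, $x'y \in EW(T)$, $xy' \in EW(T)$. So the pairs forming edges are $xx'$, $xy$, $x'y'$ and the pairs forming nonedges are $yy'$, $x'y$, $xy'$. Reading off the adjacencies, $y$ is adjacent to $x$ only (among the other three), $y'$ is adjacent to $x'$ only, and $x \sim x'$; this is exactly the path $y\,x\,x'\,y'$. Hence $T[\{y,x,x',y'\}]$ is an induced $P_4$ in $T$ itself (no gray edges are involved, since $T$ has none in this lemma, and in any case all six pairs were specified to be black or white). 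This contradicts that $T$ is $\indsat{P_4}$ — indeed $T$ is supposed to have no realization containing an induced $P_4$, but here $T$ \emph{is} such a realization. This establishes the nesting for $X$.

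The second statement is obtained by applying the first to $\comp{T}$: by Fact~\ref{SelfComplementary}, $\comp{T}$ is $\indsat{P_4}$, and in $\comp{T}$ the induced $P_3$ $uxv$ becomes an induced $\comp{P_3}$, so one should instead note directly that the roles of $Z_1, Z_2$ swap but the set $Y$ of the original becomes the analog of $X$; cleaner is simply to repeat the argument verbatim with the colors and the sets interchanged, using the second halves of Claim~\ref{cl:betweenxi} and the fact that $u$ is white-adjacent to every $y \in Y$. The main obstacle — really the only point requiring care — is making sure the four chosen vertices genuinely induce a $P_4$ and not some other configuration; the bookkeeping above shows the two "missing" cross-edges $x'y$ and $xy'$ together with $yy'$ are precisely the three nonedges of a $P_4$ on $y, x, x', y'$, so there is no ambiguity.
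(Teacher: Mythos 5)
Your proposal is correct and follows essentially the same route as the paper: take witnesses $\tilde{y}\in N_Y^B(x)-N_Y^B(x')$ and $\tilde{y}'\in N_Y^B(x')-N_Y^B(x)$, invoke Claim~\ref{cl:betweenxi} to force $xx'$ black and $\tilde{y}\tilde{y}'$ white, and exhibit the realization $\tilde{y}xx'\tilde{y}'$ of $P_4$ (with the symmetric argument for $Y$). Your extra bookkeeping confirming that all six pairs are non-gray is a fine precaution, since the claim is reused in Lemma~\ref{TechnicalLemma} where gray edges may occur inside $T[X]$ and $T[Y]$.
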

\begin{proofcite}{Claim~\ref{cl:equiv}}
Suppose $\tilde{y}\in N_Y^B(x)-N_Y^B(x')$ and $\tilde{y}'\in N_Y^B(x')-N_Y^B(x)$. Using Claim~\ref{cl:betweenxi}, $xx'$ is black and $\tilde{y}\tilde{y}'$ is white. Hence, $\tilde{y}xx'\tilde{y}'$ is a realization of $P_4$.

Suppose $\tilde{x}\in N_X^B(y)-N_X^B(y')$ and $\tilde{x}'\in N_X^B(y')-N_X^B(y)$. Using Claim~\ref{cl:betweenxi}, $yy'$ is white and $\tilde{x}\tilde{x}'$ is black. Hence, $y\tilde{x}\tilde{x}'y'$ is a realization of $P_4$. This proves Claim~\ref{cl:equiv}.
\end{proofcite}~\\

So, we define an equivalence relation on the vertices in $X$ so that $x,x'\in X$ are equivalent if and only if $N_Y^B(x)=N_Y^B(x')$. It is easy to see that this is an equivalence relation. Let the equivalence classes be $X_1,\ldots,X_{\ell}$ with the property that $x_i\in X_i$ and $x_{i+1}\in X_{i+1}$ imply $N_Y^B(x_i)\subset N_Y^B(x_{i+1})$, for $i=1,\ldots,\ell-1$.

By definition, $X_{\ell}\neq\emptyset$ so let $x\in X_{\ell}$ and consider some $y\in Y$ such that $y\not\in N_Y^B(x)$ (implying $y$ has no neighbors in $X$). If $z_1\in Z_1$ such that $yz_1\in EB(T)$, then since $xz_1$ is black, we have an induced $P_4$, namely $vxz_1y$, a contradiction. But, if $y$ has no neighbors in $Z_1$, then $y$ and $u$ can have no common neighbors.  Therefore, ${\rm dist}(y,u)\geq 3$.

If ${\rm dist}(y,u)<\infty$, then there is an induced path on at least 4 vertices between $y$ and $u$, giving the existence of an induced $P_4$. If ${\rm dist}(y,u)=\infty$, then either $T$ is a trigraph with all white edges (hence not $\indsat{P_4}$) or $T$ has at least one nontrivial component.  By Fact~\ref{CompleteDisjoint}, each component is $\indsat{P_4}$ and so applying the inductive hypothesis to any nontrivial component gives a gray edge.

Finally, we may conclude that the edges in $T[X_{\ell},V-X_{\ell}]$ are all black. By Fact~\ref{CompleteDisjoint}, both $X_{\ell}$ and $V-X_{\ell}$ are $\indsat{P_4}$. Applying the inductive hypothesis to whichever of those sets is nontrivial gives a gray edge and a contradiction.  This concludes the proof of Lemma~\ref{LemGrayExists}.~\hfill~$\Box$~\\


\subsubsection{Proof of Lemma~\ref{TechnicalLemma}}
\label{sec:prooftechlemma}
We wish to take note that this lemma requires the strong inductive hypothesis from the proof of Theorem~\ref{TheoremP4Smallest}.

If either $X$ or $Y$ is empty, then the other set has at least two vertices and the inductive hypothesis gives that there are at least $\left\lceil\frac{|X|+|Y|+1}{3}\right\rceil$ gray edges.

As in the proof of Lemma~\ref{LemGrayExists}, we will partition the vertices of $X$ into equivalence classes according to their neighborhoods in $Y$.  The statements of Claim~\ref{cl:betweenxi} and~\ref{cl:equiv} were written to apply to this lemma as well, even though gray edges are permitted in $T[X]$ and in $T[Y]$.  So, the edges between equivalence classes of $X$ are black and between equivalence classes of $Y$ are white.  Furthermore, the sets $N_Y^B(x)$ -- the black neighborhood of $x$ in set $Y$ -- form a nesting family.

So, again define an equivalence relation on the vertices in $X$ so that $x,x'\in X$ are equivalent if and only if $N_Y^B(x)=N_Y^B(x')$. Let the equivalence classes be $X_1,\ldots,X_{\ell}$.  This, in turn, defines a partition of $Y$.  For $j=1,\ldots,\ell+1$, the set $Y_j$ has the property that the edges in $T[X_i,Y_j]$ are white for $i=1,\ldots,j-1$ and the edges in $T[X_i,Y_j]$ are black for $i=j,\ldots,\ell$. See Figure~\ref{fig:XYequiv}.

\begin{figure}[ht]
  \centering
  \includegraphics[height=\figheight]{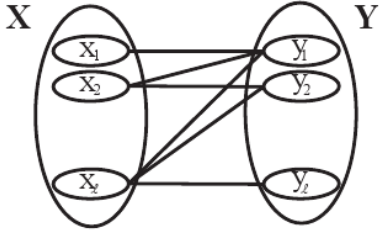}
  \caption{A decomposition of $X$ and $Y$ into equivalence classes. The pair $(X_i,Y_j)$ consists of black edges if $i\geq j$; otherwise, it consists of white edges.}\label{fig:XYequiv}
\end{figure}

The sets $Y_2,\ldots,Y_{\ell}$ are nonempty because if $Y_j$ is empty, then $N_Y^B(x)=Y_1\cup\cdots\cup Y_{j-1}$ for all $x\in X_{j-1}\cup X_j$, contradicting the definition of the equivalence classes. By Fact~\ref{FactSameBehavior}, each of the sets $X_1,\ldots,X_{\ell},Y_1,\ldots,Y_{\ell+1}$ is $\indsat{P_4}$. Thus, if the sets have size at least two, there will be enough gray edges to prove the lemma.  We will ensure that not too many of the sets are of size one.
\begin{claim}\label{cl:smallsets}
For $i=1,\ldots,\ell$, it cannot be the case that both $|X_i|=1$ and $|Y_i|=1$ or that $|X_i|=1$ and $|Y_{i+1}|=1$.
\end{claim}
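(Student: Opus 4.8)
The plan is to prove each half of the claim by contradiction, in both cases by flipping to gray the unique edge $xy$ joining the two singleton classes and invoking the standard observation that, since $T$ has no realization of $P_4$, this flipped edge must be ``used oppositely'' in any realization of $P_4$ of the resulting trigraph.

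Suppose first that $X_i=\{x\}$ and $Y_i=\{y\}$. Since the black neighborhood of $x$ in $Y$ is exactly $Y_1\cup\cdots\cup Y_i$, the edge $xy$ is black. As $T$ is $\indsat{P_4}$, the trigraph $\flip{T}{xy}$ has a realization of $P_4$; call it $P$. Because $T$ has no realization of $P_4$ and $xy\in EB(T)$, the path $P$ must use $xy$ as a white edge, so $x$ and $y$ are a nonadjacent pair of $P$, occupying one of the three nonadjacent position-pairs of $P_4$. For the second half, suppose $X_i=\{x\}$ and $Y_{i+1}=\{y\}$; now $xy$ is white, and the mirror-image argument gives a realization $P$ of $P_4$ in $\flip{T}{xy}$ using $xy$ as a black edge, so $x$ and $y$ are adjacent in $P$.

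In each half I would then rule out $P$ by inspecting its one or two remaining vertices. The only facts needed are: between-class edges inside $X$ are black and inside $Y$ are white; the edges of $T[X_a,Y_b]$ are black if $a\ge b$ and white if $a<b$; and every vertex of $V-(X\cup Y)$ is black to all of $X$ and white to all of $Y$. The mechanism is always the same. A vertex of $P$ required to be ``adjacent to $y$ but not to $x$'' cannot lie in $V-(X\cup Y)$ (black to $X$, hence to $x$) nor in a class $X_a\neq X_i$ (between-class edges in $X$ are black) nor in $X_i=\{x\}$, so it lies in some $Y_b$; nonadjacency to $x\in X_i$ forces $b>i$, and then, since $Y_i=\{y\}$, this vertex lies in a class strictly above $Y_i$ and is therefore white to $y$ — a contradiction. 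Symmetrically, a vertex required to be ``adjacent to $x$ but not to $y$'' is pinned, using $X_i=\{x\}$, into a class that is then forced to be adjacent to the other endpoint of $P$ after all. Running through the positions of $\{x,y\}$ in $P$ (there are at most six, and fewer up to the reversal symmetry of $P_4$) in this way shows $P$ cannot exist, so $\flip{T}{xy}$ has no realization of $P_4$ — contradicting that $T$ is $\indsat{P_4}$.

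The real work here is bookkeeping rather than insight: in each positional case there are several a priori locations for the ``connector'' vertices (various classes of $X$, various classes of $Y$, or $V-(X\cup Y)$), and each must be eliminated, with the relevant singleton hypothesis invoked exactly at the point where the otherwise-available class is unavailable. Keeping the case list short by exploiting the reversal symmetry of $P_4$ and the fact that the two halves of the claim are dual under complementation is the part that needs care.
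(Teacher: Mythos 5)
Your proposal is correct and follows essentially the same route as the paper: flip the black (resp.\ white) edge between the two singleton classes, observe it must be used as white (resp.\ black) in any realization of $P_4$, and then use the nested class structure (between-class edges black in $X$, white in $Y$, and $T[X_a,Y_b]$ black iff $a\ge b$) to eliminate every placement of the remaining vertices. The paper organizes the deferred bookkeeping a bit more efficiently --- it first locates the forced black neighbor $x_j\in X_j$, $j>i$, of $y_i$ (which is automatically also black to $x_i$, yielding a $P_3$) and then places only the single fourth vertex --- but this is an organizational difference, not a different argument.
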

\begin{proofcite}{Claim~\ref{cl:smallsets}}
First, suppose that $|X_i|=|Y_i|=1$ and let $X_i=\{x_i\}$ and $Y_i=\{y_i\}$. Consider $\flip{T}{x_iy_i}$ and note we may now regard $x_iy_i$ as a white edge because it was black in $T$ itself. There must be a black neighbor of $y_i$ in the realization and that must be $x_j\in X_j$ for some $j>i$. (See the diagram in Figure~\ref{fig:XYequiv}.) But $x_j$ is a black neighbor of $x_i$ also. So, $x_ix_jy_i$ is a realization of $P_3$. Suppose there is an additional vertex, $w$, in the realization of the $P_4$. If $w\in X-X_j$, then $wx_ix_j$ induces a black triangle and if $w\in X_j$ then $wx_ix_jy_i$ has a black $C_4$. Both of these cases produce a contradiction. If $w\in Y$, then $wy_i$ is white and being in the realization of $P_4$ requires $wx_i$ to be black. However, this means $wx_j$ is also black because $j>i$. This is also a contradiction.  Hence there is no realization of $P_4$ in $\flip{T}{x_iy_i}$.

Second, suppose that $|X_i|=|Y_{i+1}|=1$ and let $X_i=\{x_i\}$ and $Y_{i+1}=\{y_{i+1}\}$. Consider $\flip{T}{x_iy_{i+1}}$ and note we may now regard $x_iy_{i+1}$ as a black edge because it was white in $T$ itself. There must be a white neighbor of $x_i$ in the realization and that must be $y_j\in Y_j$ for some $j>i+1$. But $y_j$ is a white neighbor of $y_{i+1}$ also. So, $x_iy_jy_{i+1}$ is a realization of $\overline{P_3}$. Suppose there is an additional vertex, $w$, in the realization of the $P_4$. If $w\in Y-Y_j$, then $wy_{i+1}y_j$ induces a white triangle and if $w\in Y_j$ then $wx_iy_jy_{i+1}$ has a white $C_4$. Both of these cases produce a contradiction. If $w\in X$, then $wx_i$ is black and being in the realization of $P_4$ requires $wy_j$ to be black. However, this means $wy_{i+1}$ is also black because $j>i+1$. This is also a contradiction.  Hence there is no realization of $P_4$ in $\flip{T}{x_iy_{i+1}}$.

This proves Claim~\ref{cl:smallsets}.
\end{proofcite}~\\

With Claim~\ref{cl:2cases}, the proof of the lemma is almost finished, leaving only two exceptional cases.
\begin{claim}\label{cl:2cases}
There are at least $\left\lceil\frac{|X|+|Y|}{3}\right\rceil$ gray edges unless one of the following cases occurs:
\begin{enumerate}[(i.)]
\item $|Y_1|=\cdots=|Y_{\ell+1}|=1$ and $|X_1|,\ldots,|X_{\ell}|\geq 2$.\label{it:y1}
\item $|X_1|=\cdots=|X_{\ell}|=1$ and $|Y_2|,\ldots,|Y_{\ell}|\geq 2$ but $Y_1=Y_{\ell+1}=\emptyset$.\label{it:x1}
\end{enumerate}
\end{claim}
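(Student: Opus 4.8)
\noindent\emph{Proof plan for Claim~\ref{cl:2cases}.}

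The plan is one global counting argument over the $2\ell+1$ classes $X_1,\dots,X_\ell,Y_1,\dots,Y_{\ell+1}$. Write $g(S)$ for the number of gray edges of $T$ with both ends in $S$. Since edges between distinct classes $X_i,X_j$ are black, edges between distinct classes $Y_i,Y_j$ are white, and $T[X,Y]$ has no gray edge, the number of gray edges in $T[X\cup Y]$ equals $\sum_i g(X_i)+\sum_j g(Y_j)$. Each of these classes is $\indsat{P_4}$ (established earlier via Fact~\ref{FactSameBehavior}) and has fewer than $n$ vertices, so the strong inductive hypothesis gives $g(S)\ge\left\lceil\frac{|S|+1}{3}\right\rceil\ge\frac{|S|+1}{3}$ whenever $|S|\ge 2$, while $g(S)=0$ when $|S|\le 1$. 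Let $A$ be the number of classes of size exactly $1$, let $e$ be the number of empty classes (only $Y_1$ and $Y_{\ell+1}$ can be empty, so $e\le 2$), and let $p=2\ell+1-A-e$ be the number of classes of size at least $2$. Using $\sum_{|S|\ge 2}|S|=|X|+|Y|-A$, summing the bounds gives
$$ \sum_i g(X_i)+\sum_j g(Y_j)\ \ge\ \frac{|X|+|Y|-A}{3}+\frac{p}{3}\ =\ \frac{|X|+|Y|}{3}+\frac{p-A}{3}. $$
Since the left-hand side is an integer, it is at least $\left\lceil\frac{|X|+|Y|}{3}\right\rceil$ as soon as $p\ge A$, i.e.\ as soon as $2A+e\le 2\ell+1$. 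So the claim reduces to establishing $2A+e\le 2\ell+1$ outside of cases~(\ref{it:y1}) and~(\ref{it:x1}).

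For this inequality I would arrange the classes along a path in the alternating order $Y_1,X_1,Y_2,X_2,\dots,X_\ell,Y_{\ell+1}$, naming them $s_0,s_1,\dots,s_{2\ell}$, so that $s_{2i-1}=X_i$ and $s_{2j}=Y_{j+1}$. Every consecutive pair $s_ks_{k+1}$ is one of the pairs $\{X_i,Y_i\}$ or $\{X_i,Y_{i+1}\}$, so Claim~\ref{cl:smallsets} says precisely that no two consecutive $s_k$ are both singletons; equivalently, the singleton classes form an independent set in the path $P_{2\ell+1}$ on $s_0,\dots,s_{2\ell}$. Recall also that the $X_i$ are nonempty (equivalence classes) and $Y_2,\dots,Y_\ell$ are nonempty (as noted before Claim~\ref{cl:smallsets}). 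Now split on $e$. If $e=0$: an independent set of $P_{2\ell+1}$ has size at most $\ell+1$, with size exactly $\ell+1$ only for its unique maximum independent set $\{s_0,s_2,\dots,s_{2\ell}\}=\{Y_1,\dots,Y_{\ell+1}\}$; hence either $A\le\ell$ and $2A+e\le 2\ell$, or all the $Y_j$ are singletons and all the $X_i$, being nonempty and non-singleton, have at least $2$ vertices, which is case~(\ref{it:y1}). If $e=1$: the singletons avoid the one empty class, so they form an independent set of a path on $2\ell$ vertices, giving $A\le\ell$ and $2A+e\le 2\ell+1$. If $e=2$: both $Y_1$ and $Y_{\ell+1}$ are empty, so the singletons form an independent set of the path $P_{2\ell-1}$ on $s_1,\dots,s_{2\ell-1}$, giving $A\le\ell$ with size exactly $\ell$ only for its unique maximum independent set $\{s_1,s_3,\dots,s_{2\ell-1}\}=\{X_1,\dots,X_\ell\}$; hence either $A\le\ell-1$ and $2A+e\le 2\ell$, or all the $X_i$ are singletons while $Y_2,\dots,Y_\ell$, being nonempty and non-singleton, have at least $2$ vertices, which is case~(\ref{it:x1}).

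The routine parts are the bookkeeping in the first paragraph — the identity $\sum_{|S|\ge 2}|S|=|X|+|Y|-A$ and the fact that every gray edge of $T[X\cup Y]$ really does lie inside a single class, so nothing is uncounted. The step that does the actual work, and the one I would be most careful with, is the uniqueness of the maximum independent set of a path on an odd number of vertices: it is exactly this uniqueness that pins the near-extremal configurations down to the two listed cases rather than leaving a whole family of exceptions. I would also double-check the small-$\ell$ boundary: when $\ell=1$ the standing hypothesis $Y\neq\emptyset$ already forces $e\le 1$, so the $e=2$ analysis is vacuous there and no degenerate configuration slips through.
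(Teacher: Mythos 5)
Your argument is correct, and it rests on exactly the same two ingredients as the paper's proof: the per-class lower bound $\left\lceil\frac{|S|+1}{3}\right\rceil$ coming from Fact~\ref{FactSameBehavior} plus the strong inductive hypothesis, and Claim~\ref{cl:smallsets} as the device that limits singleton classes. Where you differ is in how the final count is organized. The paper matches classes into adjacent pairs --- $(X_i,Y_i)$ when $Y_1\neq\emptyset$ and $(X_i,Y_{i+1})$ when $Y_1=\emptyset$ --- bounds each pair locally, and reads off the exceptional cases as ``the one unpaired class is a singleton and every pair is tight,'' which it then unwinds via Claim~\ref{cl:smallsets}. You instead sum the per-class bounds globally, reduce everything to the inequality $2A+e\leq 2\ell+1$ via the integrality trick, and recognize Claim~\ref{cl:smallsets} as saying that the singleton classes form an independent set in the path $Y_1X_1Y_2\cdots X_\ell Y_{\ell+1}$; the two exceptional cases then drop out of the uniqueness of the maximum independent set in a path with an odd number of vertices. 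Your version buys a cleaner explanation of \emph{why} exactly two exceptional configurations occur (they are the two extremal independent sets), and it avoids the slightly delicate bookkeeping in the paper about which endpoint class is left unpaired; the paper's pairing, on the other hand, feeds more directly into the subsequent treatment of Cases~(\ref{it:y1}.)~and~(\ref{it:x1}.), where the unpaired classes $X_\ell\cup Y_{\ell+1}$ (resp.\ $X_\ell\cup Y_\ell$) are precisely what gets deleted. All the supporting facts you invoke --- that only $Y_1$ and $Y_{\ell+1}$ can be empty, that every gray edge of $T[X\cup Y]$ lies inside a single class, and that each class has fewer than $n$ vertices so the induction applies --- are established in the surrounding text, so there is no gap.
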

\begin{proofcite}{Claim~\ref{cl:2cases}}
First suppose $Y_1\neq\emptyset$.  Consider the pairs $(X_i,Y_i)$ for $i=1,\ldots,\ell$. By Claim~\ref{cl:smallsets}, at least one of the sets must have size at least 2.  If both have size at least 2, then the number of gray edges in $X_i\cup Y_i$ is at least $\left\lceil\frac{|X_i|+1}{3}\right\rceil+\left\lceil\frac{|Y_i|+1}{3}\right\rceil \geq\left\lceil\frac{|X_i|+|Y_i|+2}{3}\right\rceil$. If not, say $|Y_i|\geq 2$, then the number of gray edges in $X_i\cup Y_i$ is at least $\left\lceil\frac{|Y_i|+1}{3}\right\rceil=\left\lceil\frac{|X_i|+|Y_i|}{3}\right\rceil$. So, in this case, the total number of gray edges in $X\cup Y$ is at least $\left\lceil\frac{|X|+|Y|}{3}\right\rceil$ unless $|Y_{\ell+1}|=1$ and there are $\ell$ other components of size $1$.  By Claim~\ref{cl:smallsets}, this can only occur if $|Y_1|=\cdots=|Y_{\ell}|=1$. This is case~(\ref{it:y1}.).

Second, suppose $Y_1=\emptyset$.  Consider the pairs $(X_i,Y_{i+1})$ for $i=1,\ldots,\ell$. By Claim~\ref{cl:smallsets}, at least one of the sets must have size at least 2.  If both have size at least 2, then the number of gray edges in $X_i\cup Y_{i+1}$ is at least $\left\lceil\frac{|X_i|+1}{3}\right\rceil+\left\lceil\frac{|Y_{i+1}|+1}{3}\right\rceil \geq\left\lceil\frac{|X_i|+|Y_{i+1}|+2}{3}\right\rceil$. If not, say $|Y_{i+1}|\geq 2$, then the number of gray edges in $X_i\cup Y_{i+1}$ is at least $\left\lceil\frac{|Y_{i+1}|+1}{3}\right\rceil=\left\lceil\frac{|X_i|+|Y_{i+1}|}{3}\right\rceil$. So, in this case, the total number of gray edges in $X\cup Y$ is at least $\left\lceil\frac{|X|+|Y|}{3}\right\rceil$ unless $|X_1|=1$ and there are $\ell-1$ other components of size $1$.  By Claim~\ref{cl:smallsets}, this can only occur if $|X_2|=\cdots=|X_{\ell}|=1$. This is case~(\ref{it:x1}.).

Therefore, the only cases that remain are case~(\ref{it:y1}.) and ~(\ref{it:x1}.), completing the proof of Claim~\ref{cl:2cases}.
\end{proofcite}~\\

\noindent\textbf{Case~(\ref{it:y1}.)} Consider the trigraph, $T'$, induced by $V(T)-\left(X_{\ell}\cup Y_{\ell+1}\right)$. We claim that $T'$ is $\indsat{P_4}$. Suppose not and consider $\flip{T}{e}$ such that $e$ is a white or black edge with both endpoints in $V(T')$. No realization of $P_4$ in $T$ can have exactly three of its vertices in $V(T')$ because the pair $(Y_{\ell+1},V(T'))$ has only white edges, giving a vertex of degree 0, and $(X_{\ell},V(T'))$ has only black edges, giving a vertex of degree 3. No realization of $P_4$ in $T$ can have two vertices in $V(T')$ and two in $X_{\ell}$, which would give a $C_4$. Finally, the realization cannot have $x_{\ell}\in X_{\ell}$ and the vertex in $Y_{\ell+1}$, making $x_{\ell}$ of degree 3. Therefore, the realization must have all four vertices in $V(T')$.

Hence, $V(T')$ is a $\indsat{P_4}$ trigraph on at least $2$ vertices and by the inductive hypothesis, the number of gray edges in $T$ is at least
$$ \left\lceil\frac{|V(T')|+1}{3}\right\rceil+\left\lceil\frac{|X_{\ell}|+1}{3}\right\rceil \geq\left\lceil\frac{|V(T')|+|X_{\ell}|+2}{3}\right\rceil =\left\lceil\frac{n+1}{3}\right\rceil . $$~\\

\noindent\textbf{Case~(\ref{it:x1}.)} Consider the trigraph, $T''$, induced by $V(T)-\left(X_{\ell}\cup Y_{\ell}\right)$. We claim that $T''$ is $\indsat{P_4}$. Suppose not and consider $\flip{T}{e}$ such that $e$ is a white or black edge with both endpoints in $V(T'')$. No realization of $P_4$ in $T$ can have exactly three of its vertices in $V(T'')$ because the pair $(X_{\ell},V(T''))$ has only black edges, giving a vertex of degree 3, and $(Y_{\ell},V(T''))$ has only white edges, giving a vertex of degree 0. No realization of $P_4$ in $T$ can have two vertices in $V(T'')$ and two in $Y_{\ell}$, which would give a $\overline{C_4}$. Finally, the realization cannot have a vertex in $Y_{\ell}$ and the vertex $x_{\ell}$ in $X_{\ell}$, making $x_{\ell}$ of degree 3. Therefore, the realization must have all four vertices in $V(T'')$.

Hence, $V(T'')$ a $\indsat{P_4}$ trigraph on at least $2$ vertices and by the inductive hypothesis, the number of gray edges in $T$ is at least
$$ \left\lceil\frac{|V(T'')|+1}{3}\right\rceil+\left\lceil\frac{|Y_{\ell}|+1}{3}\right\rceil \geq\left\lceil\frac{|V(T'')|+|Y_{\ell}|+2}{3}\right\rceil =\left\lceil\frac{n+1}{3}\right\rceil . $$~\\

This concludes the proof of Lemma~\ref{TechnicalLemma}.~\hfill~$\Box$~\\


\section{Proofs of Facts}
\label{sec:factproofs}

 In this section we will provide proofs to the general facts stated earlier.~\\


\begin{proofcite}{Fact~\ref{SelfComplementary}} Let $T$ be a $\indsat{P_4}$ trigraph. We want to show that its complement is also $\indsat{P_4}$.  If $\comp{R}$ is a realization of $\comp{T}$ with an induced $P_4$, then $R$ is a realization of $T$ with an induced $P_4$.  Thus, $\comp{T}$ has no realization with an induced $P_4$.  Similarly, if $R$ is a realization of $\flip{T}{uv}$ with an induced $P_4$, then $\comp{R}$ is a realization of $\flip{\comp{T}}{uv}$ with an induced $P_4$. So, $\comp{T}$ must be $\indsat{P_4}$. This proves Fact~\ref{SelfComplementary}.
\end{proofcite}~\\


\begin{proofcite}{Fact~\ref{CompleteDisjoint}}  Using Fact \ref{SelfComplementary}, and without loss of generality, we may assume that all edges in $T[V_1,V_2]$ are white. We only need to show that $T[V_1]$ is $\indsat{P_4}$, $T[V_2]$ follows identically. If $T[V_1]$ is a gray complete graph or a single vertex, we are finished because a single vertex is trivially $\indsat{P_4}$.  So let us assume that $T[V_1]$ is not a gray complete graph. Let $e$ be a black or white edge in $T[V_1]$.  Now, consider $P$, a realization of an induced $P_4$ in $\flip{T}{e}$.  As there are only white edges between $V_1$ and $V_2$, it must be that no vertex of $P$ can be in $V_2$.  So, $T[V_1]$ is $\indsat{P_4}$.  This proves Fact~\ref{CompleteDisjoint}.
\end{proofcite}~\\


\begin{proofcite}{Fact~\ref{FactSameBehavior}} Let none of the edges in $T[S,V-S]$ be gray.  Let $s_1s_2$ be a black or white edge in $S$. Fact~\ref{SelfComplementary} allows us to assume that, without loss of generality, $s_1s_2$ is white.  Consider $P$, a realization of an induced $P_4$ in $\flip{T}{s_1s_2}$.  We will show that $P$ can not contain any vertices from $V-S$.

We consider two cases. First, suppose $P$ has exactly one $v\in V-S$ and the fourth vertex is $s_3\in S$.  Since every pair of vertices in $S$ has the same neighborhood in $V-S$, either $vs_i$ is white for $i=1,2,3$ or $vs_i$ is black for $i=1,2,3$.  This gives a vertex in $P$ either with degree 0 or degree 3, hence $P$ cannot be an induced $P_4$.

Second, suppose $P$ has exactly two vertices $v_1,v_2\in V-S$. Since the neighborhoods of $s_1$ and $s_2$ are the same in $V-S$, either (1) $s_iv_j$ is black for $i,j\in\{1,2\}$ or (2) $s_iv_j$ is white for $i,j\in\{1,2\}$ or, without loss of generality, (3) $s_iv_1$ is black for $i\in\{1,2\}$ and $s_iv_2$ is white for $i\in\{1,2\}$.

If (1) occurs, then $P$ has a $C_4$, a contradiction. If (2) occurs, then $P$ has a $\comp{C_4}$, a contradiction.  If (3) occurs, then either $v_1$ has degree 3 in $P$ or $v_2$ has degree 0 in $P$. In any case, we have a contradiction. Thus, $P$ must have all $4$ vertices in $S$ and so $T[S]$ is $\indsat{P_4}$.  This proves Fact~\ref{FactSameBehavior}.
\end{proofcite}~\\


\begin{proofcite}{Fact~\ref{FactNoGrayP4}} This fact is equivalent to saying $T$ cannot have a gray $P_4$. For the sake of contradiction, suppose that it does. By Fact~\ref{SelfComplementary}, we may assume, without loss of generality, that there are 4 vertices that form a gray $P_4$ and have either zero black edges or one black edge.  Let $abcd$ be the gray $P_4$.

If there are no black edges in $T[\{a,b,c,d\}]$, then $abcd$ is a realization of $P_4$. If $ac$ is the sole black edge, then $bacd$ is a realization of $P_4$. If $bd$ is the sole black edge, then $abdc$ is a realization of $P_4$. Finally, if $ad$ is the sole black edge, then $badc$ is a realization of $P_4$.

In all cases, there is a realization of $P_4$ in $T$, a contradiction to $T$ being $\indsat{P_4}$.  This proves Fact~\ref{FactNoGrayP4}.
\end{proofcite}~\\


\begin{proofcite}{Fact~\ref{StarBehavior}}
Let $T$ be a trigraph which is $\indsat{P_4}$ such that $C=\{u,v_1,\ldots,v_{k-1}\}$ is a gray star with center $u$.

First, we will show that the edges in $T[\{v_1,\ldots,v_{k-1}\}]$ (i.e., the edges between the leaves) have the same color. If $k\geq 4$ and $v_1v_2$, $v_1v_3$ are white and $v_2v_3$ is black, then $v_1uv_2v_3$ is a realization of $P_4$, a contradiction.  See Figure~\ref{grayS4}. If $k\geq 4$ and $v_1v_2, v_1v_3$ are black and $v_2v_3$ is white, then $uv_3v_1v_2$ is a realization of $P_4$, a contradiction.  Since $v_1,v_2,v_3$ could be chosen arbitrarily, $T[\{v_1,\ldots,v_{k-1}\}]$ is monochromatic.

\begin{figure}[ht]
   \centering
  \includegraphics[height=\figheight]{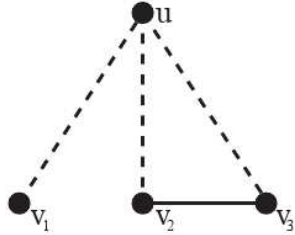}
   \caption{An example for the proof of Fact \ref{StarBehavior}. A gray star with induced $P_4$ namely $v_1uv_2v_3$.}\label{grayS4}
\end{figure}

Second, we will show that, for every vertex $w\in V(T)-C$, the edges $wv_i$ have the same color. If $k\geq 3$ and $wv_1$ is black and $wv_2$ is white, then either $wv_1uv_2$ or $wv_1v_2u$ is a realization of $P_4$, depending on the color of $v_1v_2$. Since $v_1,v_2$ could be chosen arbitrarily, $T[\{w\},C-\{u\}]$ is monochromatic for any $w$.

Let $X=\{x\in V(T)-C : T[\{x\},C]\mbox{ is black}\}$, let $Y=\{y\in V(T)-C : T[\{y\},C]\mbox{ is white}\}$ and let $Z=V(T)-\left(X\cup Y\cup C\right)$. For each $z\in Z$, either both the edge $zu$ is white and $T[\{z\},C-\{u\}]$ is black or both the edge $zu$ is black and $T[\{z\},C-\{u\}]$ is white.

It remains to show that the vertices in $Z$ behave as set forth in the fact.  If $k\geq 3$, we may assume, without loss of generality, that the edges of $T[\{v_1,\ldots,v_{k-1}\}]$ are white.  If $z\in Z$ has the property that $zu$ is white and $T[\{z\},C-\{u\}]$ is black, then $v_1zv_2u$ is a realization of $P_4$, a contradiction.  Thus, all $z\in Z$ must have the property that $T[\{z\},C-\{u\}]$ has the same color as $T[\{v_1,\ldots,v_{k-1}\}]$ and $zu$ is the complementary color.  This is exactly the condition given in the statement of the fact.

If $k=2$, then we just need to verify that $z_1,z_2\in Z$ have the same neighborhood in $\{u,v_1\}$.  If they do not, then we may assume that $z_1u$ and $z_2v_1$ are black and $z_1v_1$ and $z_2u$ are white.  In this case, either $z_1uv_1z_2$ or $uz_1z_2v_1$ is a realization of $P_4$ depending on the color of $z_1z_2$.

This classifies all the vertices in $V(T)$ and proves Fact~\ref{StarBehavior}.
\end{proofcite}~\\


\begin{proofcite}{Fact~\ref{TriangleBehavior}} Let $T$ be a trigraph which is $\indsat{P_4}$ such that $T[\{v_1,v_2,v_3\}]$ is a gray triangle. If Fact~\ref{TriangleBehavior} fails to hold, then there exists an $x\in V(T)-\{v_1,v_2,v_3\}$ such that, without loss of generality, $v_1x$ is black and $v_2x, v_3x$ are white.  But in this case, $T$ has a realization of $P_4$, namely $xv_1v_2v_3$, as is seen in Figure~\ref{grayK3}. This proves Fact~\ref{TriangleBehavior}.
\begin{figure}[ht]
  \centering
  \includegraphics[height=\figheight]{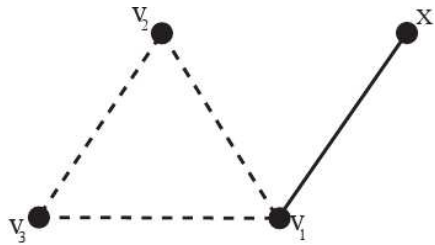}
  \caption{Example for proof of Fact \ref{TriangleBehavior}. $T$ contains a realization of $P_4$, namely $xv_1v_2v_3$.}\label{grayK3}
\end{figure}
\end{proofcite}~\\

\section{Conclusion}
\label{sec:conc}
It is not clear whether our constructions of $n$-vertex $\indsat{P_4}$ trigraphs with $\lceil (n+1)/3\rceil$ gray edges are unique (up to complementation) for some cases of $n$.  For instance, suppose $n$ is divisible by $3$ and $T$ is constructed as follows: $V(T)=Z\cup C_0\cup\{y\}$ where $C_0=\{u,v_1,v_2\}$ induces a gray star on $3$ vertices with a black edge $v_1v_2$, the edges in $T[Z,\{v_1,v_2\}]$ are black, $T[Z]$ is a $\indsat{P_4}$ trigraph on $n-4$ vertices with $\lceil (n-3)/3\rceil$ gray edges and all other edges in $T$ are white.  This $T$ has $\lceil (n+3)/3\rceil=\lceil (n+1)/3\rceil$ gray edges.

The authors would like to thank to Mike Ferrara and Ron Gould for helpful conversations regarding graph saturation. The authors would like to thank anonymous reviewers whose careful reading improved the proofs and the paper, in general.

\bibliographystyle{plain}
\bibliography{ResearchBib}

\end{document}